\definecolor{Bordeaux}{rgb}{0.545, 0.137, 0.137}
\definecolor{BleuGris}{rgb}{0.212, 0.392, 0.545}
\definecolor{Chocolat}{rgb}{0.36, 0.2, 0.09}
\definecolor{BleuTresFonce}{rgb}{0.215, 0.215, 0.36}
\DeclareMathAlphabet{\mathbbold}{U}{bbold}{m}{n}
\def\k{\mathbbold{k}}
\DeclareMathOperator{\id}{id}
\DeclareMathOperator{\ad}{ad}
\DeclareMathOperator{\sgn}{sgn}
\DeclareMathOperator{\Lie}{Lie}
\DeclareMathOperator{\Hom}{Hom}
\DeclareMathOperator{\End}{End}
\DeclareMathOperator{\MC}{MC}
\DeclareMathOperator{\Sym}{Sym}
\DeclareMathOperator{\Com}{Com}
\newcommand{\ac}{{\ensuremath{\scriptstyle \text{\rm !`}}}}
\newtheorem {theorem}{Theorem}
\newtheorem {corollary}{Corollary}
\newtheorem {proposition}{Proposition}
\theoremstyle{definition}
\newtheorem {definition}{Definition}
\newtheorem {remark}{Remark}
\newcommand{\frg}{\mathfrak{g}}
\begin{document}

\title{A tale of three homotopies}

\author{Vladimir Dotsenko \and Norbert Poncin}

\address{School of Mathematics,  Trinity College, Dublin 2, Ireland, e-mail: \texttt{vdots@maths.tcd.ie}}

\address{Mathematics Research Unit,
	 University of Luxembourg,
	 Campus Kirchberg,
	 6, Rue Richard Coudenhove-Kalergi,
	 L-1359 Luxembourg,
	 Grand Duchy of Luxembourg,
e-mail: \texttt{norbert.poncin@uni.lu}}

\keywords{homotopy algebras, homotopy morphisms, models for operads}
\subjclass[2010]{Primary 18G55; Secondary 18D50}
\thanks{The research of the authors was supported by Grant	GeoAlgPhys 2011-2013 awarded by the University of Luxembourg.}

%\date{Received: date / Accepted: date}

\maketitle

\begin{abstract}
For a Koszul operad $\mathcal{P}$, there are several existing approaches to the notion of a homotopy between homotopy morphisms of homotopy $\mathcal{P}$-algebras. Some of those approaches are known to give rise to the same notions. We exhibit the missing links between those notions, thus putting them all into the same framework. The main nontrivial ingredient in establishing this relationship is the homotopy transfer theorem for homotopy cooperads due to Drummond-Cole and Vallette.
\end{abstract}

%\setcounter{tocdepth}{1}
%\tableofcontents

\section{Introduction}

From as early as Quillen's work on rational homotopy theory \cite{Quillen69}, equivalences of various homotopy categories of algebras have proved to be one of the key tools of homotopical algebra. (This paper does not aim to serve as a historical reference, so we refer the reader to \cite{Huebschmann07} and references therein). The types of algebras for which the corresponding homotopy categories have attracted most attention over years are, eloquently described by Jean-Louis Loday, ``the three graces'', that is associative algebras, associative commutative algebras, and Lie algebras. However, the corresponding questions make sense for any type of algebras, or, in a more modern language, for algebras over any operad. For instance, for the algebra of dual numbers $\k[\epsilon]/(\epsilon^2)$ viewed as an operad with unary operations only, algebras are chain complexes, and a good understanding of the corresponding homotopy category naturally leads to the notion of a spectral sequence~\cite{Lapin01}. In general, a 
``nice'' homotopy theory of algebras over an operad $\mathcal{P}$ is available in the case of any Koszul operad. More precisely, there are several equivalent ways to relax a notion of a dg (standing for differential graded) $\mathcal{P}$-algebra up to homotopy, and define appropriate homotopy morphisms of homotopy algebras.

\smallskip

Although a few available ways to write down a definition of a homotopy $\mathcal{P}$-algebras and a homotopy morphism between two homotopy algebras are easily seen to be equivalent to one another, in order to describe the homotopy category of dg $\mathcal{P}$-algebras one has also to be able to encode homotopy relations between homotopy morphisms. (Another instance where this question naturally is raised comes from the informal relationship between the categorification of $\mathcal{P}$-algebras and relaxing $\mathcal{P}$-algebras up to homotopy, see, e.~g.~\cite{BC04,KMP11}). Basically, there are at least the following three natural candidates to encode homotopies between morphisms:
\begin{itemize}
 \item The \emph{concordance} relation between homotopies, based on two different augmentations of the dg algebra $\Omega([0,1])$ of differential forms on the interval (this notion is discussed in \cite{SSS} in detail; it seems to have first appeared in unpublished work of Stasheff and Schlessinger \cite{SS84} and is inspired by a paper of Bousfield and Gugenheim \cite{BG76})
 \item Several notions of homotopy relations based on the interpretation of homotopy morphisms as Maurer--Cartan elements in a certain $L_\infty$-algebra:
    \begin{itemize}
      \item The \emph{Quillen homotopy} notion (close to the above notion of concordance) suggesting that two Maurer--Cartan elements in an algebra $L$ are homotopic if they are images of the same Maurer--Cartan element in $L[t,dt]$ under two different morphisms to~$L$
      \item The \emph{gauge homotopy} notion suggesting that the component $L_0$ of an $L_\infty$-algebra $L$ acts on Maurer--Cartan elements, and homotopy classes are precisely orbits of that action. Gauge symmetries of Maurer--Cartan elements in differential graded Lie algebras are already somewhat prominent in the seminal paper of Nijenhuis and Richardson~\cite{NR2}; their role has been further highlighted by Schlessinger and Stasheff~\cite{SS84}, and promoted to the context of 2-groupoids by Deligne~\cite{Del}. A systematic treatment of gauge symmetries of Maurer--Cartan elements in $L_\infty$-algebras is due to Getzler~\cite{Get}, and his methods were specifically used to define homotopy of $L_\infty$-morphisms by Dolgushev~\cite{Dol07}.
      \item The \emph{cylinder homotopy} notion coming from the cylinder construction of the dg Lie algebra controlling Maurer--Cartan elements; such a cylinder is shown \cite{BM2} to be given by the Lawrence--Sullivan construction~\cite{LS}.
    \end{itemize}
 \item The notion of \emph{operadic homotopy} suggesting that the datum of two homotopy algebras, two homotopy morphisms between them, and a homotopy between those two morphisms is the same as the datum of an algebra over a certain cofibrant replacement of the coloured operad describing the diagram
 \[
\xymatrix@C=3em @!C{
X
        \ar@/^3ex/ [r] ^-{p} ^-{}="src"
        \ar@/_3ex/ [r] _-{q} _-{}="tgt"
& Y
\ar@{=} "src"!<0pt,-10pt>;"tgt"!<0pt,10pt>
}
 \]
of $\mathcal{P}$-algebras (this approach \emph{\`a la} Boardman and Vogt~\cite{BV73} was pursued by Markl in~\cite{Mar2}, following the description of homotopy algebras and homotopy morphisms via algebras over minimal models of appropriate operads \cite{Mar1}).
\end{itemize}

\smallskip

The goal of this paper is to exhibit, for a Koszul operad $\mathcal{P}$, interrelationships between these definitions, putting the above approaches in a common context. For some notions of homotopies between Maurer--Cartan elements, it is done in a recent preprint \cite{BM1}. The interplay between concordance, Quillen homotopy, and operadic homotopy is explained in this paper. This would be useful for working with homotopy category of homotopy $\mathcal{P}$-algebras, as in~\cite{Val12}. 

A very important computation which is in a way at heart of both some very interesting recent results in rational homotopy theory \cite{BM1,BM2} and our theorem on operadic homotopy is homotopy transfer of the dg commutative algebra structure of differential forms on the interval leading to a homotopy commutative algebra structure on the \v{C}ech cochain complex of the interval with Bernoulli numbers as structure constants. This computation was first performed by Cheng and Getzler~\cite{CG}. In \cite{BM1}, a version of the computation of Cheng and Getzler was performed on the dual level, resulting on a homotopy cocommutative coalgebra structure on the \v{C}ech chain complex of the interval which they show to recover the universal enveloping algebra of the Lawrence--Sullivan Lie algebra. There is a subtle point in this statement: the dual of the algebra of differential forms is not a coalgebra, since the coproduct lands in the \emph{completed} tensor product, however if one ignores the fact that intermediate 
computations involve infinite series that technically do not exist, the transferred structure is an honest homotopy coalgebra. In our case, since we perform a similar computation but transfer the structure of a homotopy cooperad (using results of Drummond-Cole and Vallette~\cite{DV}), the situation becomes even more subtle. We therefore create a framework that justifies the infinite series computations, proving directly that partial sums of those infinite series give higher structures that converge as the upper summation limit goes to infinity.

\smallskip

The paper is organised as follows. In Section~\ref{background-operads}, we briefly recall all necessary definitions and facts of operadic homotopical algebra. In Section~\ref{homotopies}, we provide background information on the existing notions of homotopies; even though the three different notions of a homotopy between Maurer--Cartan elements in $L_\infty$-algebras are fairly well understood, we spell out the corresponding definitions for the sake of completeness. In Section~\ref{MC-and-Quillen}, we explain the relationship between the notion of concordance homotopy and that coming from homotopy of Maurer--Cartan elements. In Section~\ref{concordance-and-Markl}, we explain the relationship between the notion of concordance homotopy and that of operadic homotopy. In fact we provide the first, to our knowledge, explicit recipe to write a definition of operadic homotopy, even though it is complicated since it involves nested trees in homotopy transfer formulae. We conclude with an outline of some future directions in Section~\ref{future}.

\subsection*{Acknowledgements} 
We would like to thank David Khudaverdyan for inspiring discussions, Urtzi Buijs and Aniceto Murillo for sending copies of their related work, and Alberto Canonaco for sending a copy of~\cite{Can1}. Special thanks are due to  Bruno Vallette for numerous useful discussions and for sharing a preliminary version of \cite{Val12}, to Ezra Getzler for enlightening discussions of  \cite{Get}, and to Martin Markl and Martin Doubek for discussions of rich ideas of \cite{Mar2} and \cite{Doubek}.  
Some extensive work on this paper was done while the first author was visiting Maria Ronco at University of Talca; he is most grateful for the invitation and the excellent working conditions enjoyed during his stay there.

\section{Operadic homotopical algebra}\label{background-operads}

We do not aim to provide a comprehensive treatment of homotopical algebra for operads since it would require a textbook rather than a paper; we however tried to collect all basic notation, slightly uncommon definitions and some proofs of facts we could not locate in the available literature. We  refer the reader to \cite{LV} for all the missing details.

\subsection{Operads: notational (and other) conventions}

All vector spaces are defined over a field $\k$ of characteristic~$0$. We shall use coloured operads throughout, and therefore we find it beneficial to recall some definitions, directing the reader to \cite{LV} for definitions of the corresponding non-coloured notions. More details on coloured operads can be found in \cite{vdL}. For a set $\mathsf{C}$, a \emph{$\mathsf{C}$-coloured $\mathbb{S}$-module} is a functor from the category of $\mathsf{C}$-coloured finite sets (with colour-preserving bijections as morphisms) to $\mathsf{C}$-graded vector spaces. Similarly to how a non-coloured $\mathbb{S}$-module $\mathcal{V}$ is completely determined by the components $\mathcal{V}(n):=\mathcal{V}(\{1,2,\ldots,n\})$, a $\mathsf{C}$-coloured $\mathbb{S}$-module $\mathcal{V}$ is completely determined by its \emph{components} $\mathcal{V}(c_1,\ldots,c_n):=\mathcal{V}(\{(1,c_1),(2,c_2),\ldots,(n,c_n)\})$ for $c_1,\ldots,c_n\in\mathsf{C}$.

 In some instances, we shall use \emph{$\mathsf{C}$-graded chain complexes}, that is, $\mathsf{C}$-graded vector spaces for which each individual component is a chain complex.  The category of $\mathsf{C}$-coloured $\mathbb{S}$-modules has an important object that we denote by $\mathcal{I}$; it is the functor that vanishes on all sets except one-element sets, and on a one-element set with the only element of colour $c$, its value is the $\mathsf{C}$-graded vector space whose only nonzero component is that of colour $c$, and that component is one-dimensional. For a non-coloured chain complex $U$, and $c\in\mathsf{C}$, we denote by $U_c$ the $\mathsf{C}$-graded chain complex whose only nonzero component is the $c$-graded one, and it is equal to $C$. For a non-coloured $\mathbb{S}$-module $\mathcal{V}$, and $c_1,c_2\in\mathsf{C}$, we denote by $\mathcal{V}_{(c_1\to c_2)}$ the $\mathsf{C}$-coloured $\mathbb{S}$-module whose only nonzero components are $\mathcal{V}_{(c_1\to c_2)}(\underbrace{c_1,c_1,\ldots,c_1}_{n \text{ times}}):=\mathcal{V}(n)_{c_2}$.

The category of $\mathsf{C}$-coloured $\mathbb{S}$-modules has a well known monoidal structure called \emph{composition} and denoted by $\circ$ for which $\mathcal{I}$ is the unit; monoids in this category are called \emph{$\mathsf{C}$-coloured operads}. For a $\mathsf{C}$-graded vector space $Z$, the \emph{coloured endomorphism operad} $\End_Z$ is the $\mathsf{C}$-coloured operad whose component $\End_Z(c_1,\ldots,c_n)$ is the $\mathsf{C}$-graded vector space with the $c$-graded component being $\Hom(Z_{c_1}\otimes\cdots\otimes Z_{c_n},Z_c)$, and obvious composition maps. An \emph{algebra} over a $\mathsf{C}$-coloured operad $\mathcal{O}$ is a $\mathsf{C}$-graded vector space $Z$ together with a morphism of coloured operads $\mathcal{O}\to\End_Z$. The additional characteristics ``coloured'' and ``differential graded'' that an operad or an $\mathbb{S}$-module may have will always be clear from the context, and we shall use just the words ``operad'' and ``$\mathbb{S}$-module'' in most cases for brevity. It is also worth recalling that besides the composition $\mathcal{V}\circ\mathcal{W}$, one can also define the \emph{infinitesimal composition} $\mathcal{V}\circ_{(1)}\mathcal{W}$, which consists of the elements of $\mathcal{V}\circ(\mathcal{I}\oplus\mathcal{W})$ that are linear in $\mathcal{W}$. 

To handle suspensions, we introduce a formal symbol~$s$ of degree~$1$. For a graded vector space~$L$, its suspension $sL$ is nothing but $\k s\otimes L$. For an augmented (co)operad $\mathcal{O}$ (for example, for every (co)operad $\mathcal{O}$ with $\dim\mathcal{O}(1)=1$), we denote by $\overline{\mathcal{O}}$ its augmentation (co)ideal. 

We shall frequently use the chain complex $C_\bullet([0,1])$,  the \v{C}ech chain complex of the interval. It is the chain complex that has basis elements $\mathbf{0}$, $\mathbf{1}$, and $\mathbf{01}$ of degrees $0$, $0$, and $1$ respectively, and the differential $\partial(\mathbf{01})=\mathbf{1}-\mathbf{0}$. 

\subsection{Operadic Koszul duality and homotopy algebras}\label{sec:operadic-recollection}

Given an  $\mathbb{S}$-module $\mathcal{V}$, one can define the \emph{free operad} $\mathcal{F}(\mathcal{V})$ generated by $\mathcal{V}$ and the \emph{cofree cooperad} $\mathcal{F}^c(\mathcal{V})$ generated by $\mathcal{V}$; as $\mathbb{S}$-modules, they both are spanned by ``tree-shaped tensors''. Each of them admits a weight grading, e.g. $\mathcal{F}(\mathcal{V})^{(k)}$ is spanned by tree-shaped tensors corresponding to trees with $k$ vertices, or, in other words, by composites of $k$ generators. 

A dg operad is called \emph{quasi-free} if its underlying operad is free. A \emph{model} of an non-dg operad $\mathcal{O}$ is a quasi-free operad $(\mathcal{F}(\mathcal{U}),d)$ equipped with a surjective quasi-isomorphism $(\mathcal{F}(\mathcal{U}),d)\to \mathcal{O}$. We shall use the definition of minimal models for operads from \cite{DV} which is more general than the one from \cite{Mar3}, and is required for our purposes. Namely, we say that a quasi-free operad $(\mathcal{F}(\mathcal{U}),d)$ is \emph{minimal} if its differential is decomposable, that is $d(\mathcal{U})\subset \mathcal{F}(\mathcal{U})^{(\ge 2)}$, and its $\mathbb{S}$-module of generators admits a direct sum decomposition $\mathcal{U}=\bigoplus_{k\ge 1}\mathcal{U}^{(k)}$ satisfying $d(\mathcal{U}^{(k+1)})\subset \mathcal{F}(\bigoplus_{i=1}^k\mathcal{U}^{(i)})$, the \emph{Sullivan triangulation condition}.

To an $\mathbb{S}$-module $\mathcal{V}$ and an $\mathbb{S}$-submodule $\mathcal{R}\subset\mathcal{F}(\mathcal{V})^{(2)}$ one can associate an operad $$\mathcal{P}=\mathcal{P}(\mathcal{V},\mathcal{R}),$$ the universal quotient operad $\mathcal{O}$ of $\mathcal{F}(\mathcal{V})$ for which the composite
\[
\mathcal{R}\hookrightarrow\mathcal{F}(\mathcal{V})\twoheadrightarrow\mathcal{O}
\]
is zero. Similarly, to an $\mathbb{S}$-module $\mathcal{V}$ and an $\mathbb{S}$-submodule $\mathcal{R}\subset\mathcal{F}^c(\mathcal{V})^{(2)}$ one can associate a cooperad $\mathcal{Q}=\mathcal{Q}(\mathcal{V},\mathcal{R})$, the universal suboperad $\mathcal{C}\subset\mathcal{F}^c(\mathcal{V})$ for which the composite
\[
\mathcal{C}\hookrightarrow\mathcal{F}^c(\mathcal{V})\twoheadrightarrow\mathcal{F}^c(\mathcal{V})^{(2)}/\mathcal{R}
\]
is zero. The Koszul duality for operads assigns to an operad $\mathcal{P}=\mathcal{P}(\mathcal{V},\mathcal{R})$ its Koszul dual cooperad $$\mathcal{P}^\ac:=\mathcal{Q}(s\mathcal{V},s^2\mathcal{R}),$$ and to a cooperad $\mathcal{Q}=\mathcal{Q}(\mathcal{V},\mathcal{R})$ its Koszul dual operad $$\mathcal{Q}^\ac:=\mathcal{P}(s^{-1}\mathcal{V},s^{-2}\mathcal{R}).$$ An operad $\mathcal{P}$ is said to be \emph{Koszul} if its Koszul complex  ($\mathcal{P}^{\ac}\circ\mathcal{P}$ with the differential coming from a certain twisting morphism between $\mathcal{P}^{\ac}$ and $\mathcal{P}$) is quasi-isomorphic to $\mathcal{I}$. 

It is well known that if $\mathcal{P}$ is a Koszul operad, then the datum of a homotopy $\mathcal{P}$-algebra structure on a vector space $V$ is equivalent to the datum of a square zero coderivation of degree~$-1$ of the cofree $\mathcal{P}^\ac$-coalgebra $\mathcal{P}^\ac(V)$. Such a coderivation makes the latter coalgebra into a chain complex referred to as the bar complex of~$V$, and denoted $\mathsf{B}(V)$. For every homotopy $\mathcal{P}$-algebra structure on $V$,  we shall denote by $D_V$ the differential of  $\mathsf{B}(V)$, and by $d^{(k)}_V$ the $k$-th restriction of $D_V$, which is a composite of the restriction of $D_V$ to $\mathcal{P}^{\ac}(k)\otimes_{S_k}V^{\otimes k}\subset \mathcal{P}^{\ac}(V)$ and the projection $\mathcal{P}^\ac(V)\twoheadrightarrow V$. 

The same definitions apply when replacing algebras with coalgebras: for a Koszul cooperad $\mathcal{Q}$, a structure of a homotopy $\mathcal{Q}$-coalgebra on a vector space $V$ is exactly the same as a square zero derivation of degree~$1$ of the free $\mathcal{Q}^\ac$-algebra $\mathcal{Q}^\ac(V)$. Such a datum makes the latter coalgebra into a cochain complex referred to as the cobar complex of~$V$ and denoted $\Omega(V)$. 

The above statements also apply to the case when $V$ itself is a homotopy (co)operad, that is a homotopy (co)algebra over the (Koszul) coloured (co)operad encoding non-coloured operads. In the case of non-symmetric operads, that coloured operad is defined and studied in detail in \cite{vdL}, in the case of symmetric operads, the definition is given in \cite{BM07,KS}. We however would like to make some clarifying remarks since when applying the Koszul duality to that operad one may make different choices, and end up with several different notions of homotopy operads, see, e.g. a recent preprint \cite{DV15} where the action of symmetric groups on operads is also relaxed up to homotopy. 
We consider operads coloured by a category~\cite{Pet12}: the set of colours for our operads is $\mathbb{N}_+=\{1,2,\ldots\}$, but in addition each colour $n$ has $S_n$ as its group of automorphisms. Hence, the coloured collections underlying the corresponding coloured operads will be collections of vector spaces $V(c_1,\ldots,c_n;c)$ that, in addition to the action of permutations corresponding to same colours in the list $c_1,\ldots,c_n$, have a left $\k S_{c_1}\otimes\cdots\otimes\k S_{c_n}$-module structure, and a right $\k S_c$-module structure, and the left $S_{c_1}\otimes\cdots\otimes\k S_{c_n}$-module structure is compatible with permutations of colours. For such operads, it is possible to generalise the notion of Koszul duality \emph{\`a la} \cite{GK}, the notion of a Gr\"obner basis \emph{\`a la} \cite{DK10}, and various results of operadic homotopical algebra \emph{\`a la} \cite[Chapter 10]{LV}; these generalisations, while would require a separate paper to fill in all the details, are fairly straightforward, and we are using them implicitly in several proofs throughout this paper. In fact, the only coloured operad of this generalised form that we need is the coloured operad $\mathcal{O}$ with generators $\alpha_{i,\sigma}\in\mathcal{O}(n,m;n+m-1)$, $1\le i\le n$, $\sigma\in S_{m+n-1}$; this operation encodes infinitesimal operadic compositions via $\alpha_{i,\sigma}(f,g)=(f\circ_i g).\sigma$. This operad is presented by quadratic relations that encode associativity of operadic compositions~\cite{Mar3}. Moreover, one of several standard choices of normal forms for computing operadic compositions (e.g. by choosing left-to-right levelisations of trees) leads to a conclusion that this operad  is Koszul because it can be easily seen to admit a quadratic Gr\"obner basis~\cite{DK10}, and by a direct inspection, this operad is self-dual with respect to Koszul duality for coloured operads. 
Even more generally, for a given set of colours $\mathsf{C}$ (without nontrivial automorphisms), a $\mathsf{C}$-coloured homotopy (co)operad $V$ can be viewed as a homotopy (co)algebra over an appropriate coloured (co)operad $\mathcal{O}_{\mathsf{C}}$; this coloured (co)operad satisfies all the properties we just outlined for $\mathsf{C}=\{*\}$. 

In particular, this translates into the fact that for an $\mathbb{S}$-module $\mathcal{W}$, a square zero derivation of degree $-1$ of the free operad $\mathcal{F}(s^{-1}\mathcal{W})$ is equivalent to a structure of a homotopy cooperad on $\mathcal{W}$, see~\cite{vdL}; in fact, for a cooperad $\mathbb{S}$-module $\mathcal{W}$, the free operad $\mathcal{F}(s^{-1}\mathcal{W})$ equipped with that differential is precisely the cobar complex $\Omega(\mathcal{W})$. In terms of operadic cobar complexes, one can give an alternative definition of homotopy algebras over Koszul operads: a homotopy $\mathcal{P}$-algebra structure (or a $\mathcal{P}_\infty$-algebra structure) on a chain complex $V$ is the same as the structure of an algebra over the cobar complex $\Omega(\overline{\mathcal{P}^\ac})$ on $V$. (This cobar complex is often denoted by $\mathcal{P}_\infty$). Similarly to how cooperations of an $A_\infty$-coalgebra are indexed by positive integers (the label of a cooperation describes in how many parts it splits its argument), cooperations of a homotopy cooperad $\mathcal{W}$ are indexed by trees. For each tree $t$, the cooperation $\Delta_t\colon s^{-1}\mathcal{W}\to \mathcal{F}(s^{-1}\mathcal{W})$ takes an element of $s^{-1}\mathcal{W}$ to a sum of terms in the free operad $\mathcal{F}(s^{-1}\mathcal{W})$, each term corresponding to a certain way to decorate internal vertices of $t$ by elements of $s^{-1}\mathcal{W}$. It is of course possible to encode these as maps from $\mathcal{W}$ to $\mathcal{F}(\mathcal{W})$ by applying appropriate (de)suspensions, thus arriving to the more conventional definition where the infinitesimal decomposition map in a usual cooperad has degree $0$. 

Throughout this paper, we always use the letter $\mathcal{P}$ to denote a non-coloured non-graded finitely generated Koszul operad with $\mathcal{P}(1)=\k$.  The ``non-coloured'' assumption is merely there to simplify the notation a little bit (all the results hold in the coloured case also), while the other assumptions cannot be just dropped, while each of them can in principle be replaced by a more weak but more technical assumption, e.g. instead of considering operads with $\mathcal{P}(1)=\k$ one can look at augmented operads admitting a minimal model in the sense described above. 
Under our assumptions, the cobar complex $\Omega(\overline{\mathcal{P}^{\ac}})$ is the minimal model of $\mathcal{P}$. 

\subsection{Morphisms and homotopy morphisms}\label{infinitymor}

To deal with homotopy algebras and their morphisms, we shall mainly use $\{x,y\}$-coloured $\mathbb{S}$-modules and operads.  For a non-coloured operad $\mathcal{P}=\mathcal{F}(\mathcal{V})/(\mathcal{R})$, a pair of $\mathcal{P}$-algebras and an algebra morphism between them can be encoded as an algebra over a certain $\{x,y\}$-coloured operad $\mathcal{P}_{\bullet\to\bullet}$. The generators of that operad are $\mathcal{V}_{(x\to x)}$ (encoding the structure maps of the first algebra), $\mathcal{V}_{(y\to y)}$ (encoding the structure maps of the second algebra), and the $\mathbb{S}$-module $\mathcal{M}_{(f)}$, for which the only nonzero component is $\mathcal{M}_{(f)}(x)=\k_y$ (encoding the map between the two algebras). Its relations are $\mathcal{R}_{(x\to x)}$, $\mathcal{R}_{(y\to y)}$, and $f\circ v_{(x\to x)}-v_{(y\to y)}\circ f^{\otimes n}$ for each $v\in\mathcal{V}(n)$. This operad is homotopy Koszul in the sense of \cite{MV}; we shall recall its minimal model below.

Recall that a homotopy morphism between two homotopy $\mathcal{P}$-algebras is the same as a dg $\mathcal{P}^\ac$-coalgebra morphism between their bar complexes. (Dually, a homotopy morphism between two homotopy $\mathcal{Q}$-coalgebras is the same as a dg $\mathcal{Q}^\ac$-algebra morphism between their cobar complexes).
Similarly to how a homotopy $\mathcal{P}$-algebra structure can be defined as an algebra over the operad $\mathcal{P}_\infty=\Omega(\overline{\mathcal{P}^\ac})$, there is a description of homotopy morphisms in terms of algebras over some dg operad, which we shall now define. 

Let us consider the $\{x,y\}$-coloured $\mathbb{S}$-module
 \[
\mathcal{V}_{\bullet\to\bullet,\infty}:=\overline{\mathcal{P}^{\ac}}_{(x\to x)}\oplus \overline{\mathcal{P}^{\ac}}_{(y\to y)}\oplus s\mathcal{P}^{\ac}_{(x\to y)}  ,
 \]
It has a structure of a homotopy cooperad defined as follows: on $\overline{\mathcal{P}^{\ac}}_{(x\to x)}$ and $\overline{\mathcal{P}^{\ac}}_{(y\to y)}$, one uses the cooperad structure corresponding to that of $\overline{\mathcal{P}^{\ac}}$, whereas on $s\mathcal{P}^{\ac}_{(x\to y)}$ there are two types of nonzero decomposition maps, the map
 \[
\mathcal{P}^{\ac}_{(x\to y)}\cong \mathcal{P}^{\ac}\to \mathcal{P}^{\ac}\circ_{(1)}\overline{\mathcal{P}^{\ac}}\cong\mathcal{P}^{\ac}_{(x\to y)}\circ_{(1)}s^{-1}\overline{\mathcal{P}^{\ac}}_{(x\to x)}  
 \]
obtained by de-suspending the infinitesimal decomposition $\mathcal{P}^{\ac}\to \mathcal{P}^{\ac}\circ_{(1)}\mathcal{P}^{\ac}$ in the cooperad $\mathcal{P}^{\ac}$, and the map
 \[
 \mathcal{P}^{\ac}_{(x\to y)}\cong \mathcal{P}^{\ac}\to \overline{\mathcal{P}^{\ac}}\circ \mathcal{P}^{\ac}\cong \overline{s^{-1}\mathcal{P}^{\ac}}_{(y\to y)}\circ \mathcal{P}^{\ac}_{(x\to y)}  
 \]
obtained by de-suspending the full decomposition $\mathcal{P}^{\ac}\to \mathcal{P}^{\ac}\circ\mathcal{P}^{\ac}$. The fact that all these maps satisfy the constraints required by the definition of a homotopy cooperad follow from the fact that the structure  maps of $\mathcal{P}^{\ac}$ satisfy the constraints of a cooperad (coassociativity). 

The following proposition follows by inspection from the definition of a homotopy morphism as the morphism of bar complexes; we omit the proof. 

\begin{proposition}
The datum of a homotopy morphism between two homotopy $\mathcal{P}$-algebras $X$ and $Y$ is equivalent to an algebra over the cobar complex $\Omega(\mathcal{V}_{\bullet\to\bullet,\infty})$ for which the actions of  $\overline{\mathcal{P}^{\ac}}_{(x\to x)}$ and $\overline{\mathcal{P}^{\ac}}_{(y\to y)}$ induce the given homotopy $\mathcal{P}$-algebra structures on $X$ and $Y$ respectively.	
\end{proposition}

In what follows, we shall denote the cobar complex $\Omega(\mathcal{V}_{\bullet\to\bullet,\infty})$ by  $\mathcal{P}_{\bullet\to\bullet,\infty}$. The following statement extends the understanding of homotopy $\mathcal{P}$-algebras as algebras over the minimal model of $\mathcal{P}$; it is essentially \cite[Prop.~56]{MV} for which we provide a detailed proof.

\begin{proposition}
The operad $\mathcal{P}_{\bullet\to\bullet,\infty}$ is the minimal model of $\mathcal{P}_{\bullet\to\bullet}$. 
\end{proposition}

\begin{proof}
Let us consider the weight filtration of the cobar complex $\Omega(\mathcal{V}_{\bullet\to\bullet,\infty})$, that is the filtration by the number of internal vertices of trees in the free operad. Inspecting the definition of the homotopy cooperad $\mathcal{V}_{\bullet\to\bullet}$, we see that the differential $d^0$ of the corresponding spectral sequence is equal to zero, and the differential $d^1$ is obtained from forgetting the full decomposition map on $s\mathcal{P}^{\ac}_{(x\to y)}$, that is only retaining the map
\[
\mathcal{P}^{\ac}_{(x\to y)}\cong \mathcal{P}^{\ac}\to \mathcal{P}^{\ac}\circ_{(1)}\overline{\mathcal{P}^{\ac}}\cong\mathcal{P}^{\ac}_{(x\to y)}\circ_{(1)}s^{-1}\overline{\mathcal{P}^{\ac}}_{(x\to x)}  
\]
obtained by de-suspending the infinitesimal decomposition $\mathcal{P}^{\ac}\to \mathcal{P}^{\ac}\circ_{(1)}\mathcal{P}^{\ac}$ in the cooperad $\mathcal{P}^{\ac}$. Thus, the cobar complex $\Omega(\mathcal{V}_{\bullet\to\bullet,\infty})$ with the differential $d^1$ becomes isomorphic to $\Omega((\mathcal{P}^{(2)}_{\bullet\to\bullet})^{\ac})$, the cobar complex of the operad $\mathcal{P}^{(2)}_{\bullet\to\bullet}$ with generators $\mathcal{V}_{(x\to x)}\oplus\mathcal{V}_{(y\to y)}\oplus\mathcal{M}_{(f)}$ and relations $\mathcal{R}_{(x\to x)}$, $\mathcal{R}_{(y\to y)}$, and $f\circ v_{(x\to x)}$ for each $v\in\mathcal{V}(n)$. The latter operad is known to be Koszul \cite[Lemma~55]{MV}, so the homology of the cobar complex of its Koszul dual is concentrated in degree zero. Thus further differentials of our spectral sequence vanish, and the  homology of $\Omega(\mathcal{V}_{\bullet\to\bullet,\infty})$ is concentrated in degree zero, where it is, by direct inspection, equal to $\mathcal{P}_{\bullet\to\bullet}$.
\end{proof}

\subsection{Homotopy transfer theorem for homotopy cooperads}

One of the key features of homotopy structures is that they can be transferred along homotopy retracts. The following result generalising (and dualising) both the homotopy transfer formulae for $A_\infty$-coalgebras \cite{KS,LV} and the homotopy transfer formulae for (pr)operads \cite{Gra} is proved in~\cite{DV}. The signs in the formulae are Koszul signs coming from various (de)suspensions, and writing them by a closed formula is not in any way useful; see \cite{Gra} for some further explanations of the origin of signs. 

\begin{proposition}[\cite{DV}]\label{thm:HTTCoop}
Let $(\mathcal{C}, \lbrace \Delta_t \rbrace)$ be a homotopy cooperad. Let
$(\mathcal{H}, d_\mathcal{H})$ be a dg $\mathbb{S}$-module, which is a homotopy retract of the dg $\mathbb{S}$-module $(\mathcal{C}, d_\mathcal{C})$:
\begin{eqnarray*}
&\xymatrix{     *{ \quad \ \  \quad (\mathcal{C}, d_\mathcal{C})\ } \ar@(dl,ul)[]^{H}\ \ar@<0.5ex>[r]^-{p} & *{\
(\mathcal{H}, d_\mathcal{H}) \ .\quad \ \  \ \quad }  \ar@<0.5ex>[l]^-{i}} &
\end{eqnarray*}
Consider the formulae
\begin{equation}\label{HTTforCoops}
\widetilde{\Delta}_t:=\sum \pm\,   t(p)\circ \big(
(\Delta_{t_{k+1}} H)     \circ_{j_k}   (\cdots   (\Delta_{t_3} H)     \circ_{j_2}  ((\Delta_{t_2} H)  \circ_{j_1} \Delta_{t_1} )   )
     \big)\circ i \ ,
\end{equation}
where $t$ is a tree with at least two vertices, and the sum is over all possible ways of writting it by successive expansions of trees with at least two vertices,
 \[
t=(((t_1\circ_{j_1} t_2)\circ_{j_2} t_3) \cdots )\circ_{j_k} t_{k+1}    \ ,
 \]
so one begins with the tree $t_{1}$, expands its vertex $j_1$ by replacing it with the tree $t_2$, then expands the vertex $j_2$ of the result by replacing it with the tree $t_3$ etc.  (The notation $(\Delta_{t'} H)  \circ_j \Delta_{t}$ means that we apply $\Delta_{t'} H$ at the $j^\textrm{th}$ vertex of the $t$-shaped elements of the free operad arising upon the application of $\Delta_{t}$). These formulae create the necessary ``correction terms'' one has to add to the transferred decomposition maps $t(p)\circ \Delta_t\circ i$ in order to define a homotopy cooperad structure on the dg $\mathbb{S}$-module $(\mathcal{H}, d_\mathcal{H})$. 
\end{proposition}

\subsection{Maurer--Cartan description of homotopy algebras and morphisms}\label{ho-coop}

Here we discuss, following \cite{MV,vdL}, a description of homotopy $\mathcal{P}$-algebras and homotopy morphisms of those algebras in terms of solutions to the Maurer--Cartan equation in a certain $L_\infty$-algebra. Unlike the case of differential graded Lie algebras, the defining equation of Maurer--Cartan elements in $L_\infty$-algebras only makes sense under some extra conditions; we recall one of the possible choices, following \cite{Ber11}.

\begin{definition}
A $L_\infty$-algebra $L$ is said to be \emph{complete} if it is equipped with a decreasing filtration 
 \[
L=F^1L\supseteq F^2L\supseteq \ldots\supseteq F^nL\supseteq\ldots
 \]
such that 
\begin{itemize}
 \item for each $k$ and $r$,  we have \[\ell_k(F^rL,L,\ldots,L)\subseteq F^{r+1}L\]
 \item for each $r$, there exists some $N$ such that for all $k>N$ we have \[\ell_k(L,L,\ldots,L)\subset F^rL\]
 \item $L$ is complete with respect to this filtration, that is the canonical map \[L\to \varprojlim L/F^rL\] is an isomorphism.
\end{itemize}
A complete $L_\infty$-algebra $L$ is said to be profinite if each quotient $L/F^rL$ is finite dimensional.
\end{definition}

A class of complete $L_\infty$-algebras that we shall primarily need for our purposes is given by \emph{convolution $L_\infty$-algebras}. Suppose that $\mathcal{C}$ is a homotopy cooperad with the total decomposition map
  \[
\Delta_{\mathcal{C}}\colon s^{-1}\mathcal{C}\to\mathcal{F}(s^{-1}\mathcal{C})^{(\ge 2)},
  \] and $\mathcal{P}$ is a dg operad with the induced composition map
  \[
\widetilde{\mu}_{\mathcal{P}}\colon\mathcal{F}(\mathcal{P})^{(\ge 2)}\to \mathcal{P}.
 \]
(Note that we use the (de)suspended definition in one case, and the usual definition in the other one; this corresponds to the almost-self-duality of the coloured operad encoding non-coloured operads). In this case the collection $\Hom_{\k}(\mathcal{C},\mathcal{P})$ is a homotopy operad, the \emph{convolution homotopy operad of $\mathcal{C}$ and $\mathcal{P}$}, and hence the product of components of this collection is an $L_\infty$-algebra~\cite{vdL}. The structure maps $\ell_n$ of that $L_\infty$-algebra are, for $n>1$,
\begin{equation}\label{l-infty}
\ell_n(\phi_1,\ldots,\phi_n)=\sum_{\sigma\in\mathbb{S}_n}(-1)^{\sgn(\sigma,\phi_1,\ldots,\phi_n)}\widetilde{\mu}_{\mathcal{P}}\circ(\phi_{\sigma(1)}\otimes\ldots\otimes \phi_{\sigma(n)})\circ(s^{\otimes n})\circ\Delta_n s^{-1},
\end{equation}
where $\Delta_n$ is the component of ${\Delta}_{\mathcal{C}}$ which maps $\mathcal{C}$ to $\mathcal{F}(\overline{\mathcal{C}})^{(n)}$, that is the sum of all cooperations $\Delta_t$ over trees $t$ with $n$ internal vertices, see~\cite{MV,vdL}. The map $\ell_1$ is the usual differential of the space of maps between two chain complexes:
 \[
\ell_1(\phi)=D(\phi)=d_{\mathcal{P}}\circ \phi-(-1)^{|\phi|}\phi\circ d_{\mathcal{C}}.
 \]
The product of the spaces of $\mathbb{S}_n$-equivariant maps 
 \[
  \prod_{n\ge 1} \Hom_{\mathbb{S}_n}(\mathcal{C}(n),\mathcal{P}(n))
 \]
can be shown to be an $L_\infty$-subalgebra of this algebra, which we shall be referring to as \emph{convolution $L_\infty$-algebra} of $\mathcal{C}$ and $\mathcal{P}$. 

All $L_\infty$-algebras we consider in this paper will arise as convolution algebras. To ensure their completeness, we shall be using the following result (which, in all cases we deal with, will be manifestly applicable). 

\begin{proposition}
If the cobar complex of $\mathcal{C}$ is a minimal operad with finite-dimensional components $\mathcal{C}^{(k)}$ of the decomposition  $\mathcal{C}=\bigoplus_{k\ge 1}\mathcal{C}^{(k)}$ implementing the Sullivan triangulation condition, then $L$, the convolution $L_\infty$-algebra of $\mathcal{C}$ and $\mathcal{P}$ is a complete $L_\infty$-algebra with respect to the filtration whose $p^\text{th}$ term $F^pL$ is given by 
 \[
\prod_{n\ge1, k\ge  p} \Hom_{\mathbb{S}_n}(\mathcal{C}^{(k)}(n),\mathcal{P}(n)), 
 \]
that is the maps that vanish on $\bigoplus_{k<p}\mathcal{C}^{(k)}$. 
\end{proposition}

\begin{proof}
The first condition of completeness follows directly from the Sullivan triangulation condition: the operadic decomposition of an element from $\mathcal{C}^{(k)}$ with $k\le r$ does not contain elements from $\mathcal{C}^{(r)}$ and higher, so $\ell_k(F^rL,L,\ldots,L)\subseteq F^{r+1}L$. The second condition essentially expresses the fact that for every $c\in\mathcal{C}$ the number of trees $t$ for which $\Delta_t(c)\ne 0$ is finite. The third condition is obvious.  \qed
\end{proof}

\begin{definition}
Let $L$ be a complete $L_\infty$-algebra with the structure maps $\ell_k$, $k\ge 1$. An element $\alpha\in L_{-1}$ is said to be a Maurer--Cartan element (notation: $\alpha\in\MC(\frg)$) if
\[
\sum_{k\ge 1}\frac{1}{k!}\ell_k(\alpha,\alpha,\ldots,\alpha)=0.
\]
\end{definition}

Note that the Maurer--Cartan equation in a complete $L_\infty$-algebra makes sense, since the infinite series converges with respect to topology defined by the filtration. In all formulae throughout this paper, we only use infinite series in $L_\infty$-algebras that are complete, and no convergence issues ever arise.

Let $L$ be a complete $L_\infty$-algebra with the structure maps $\ell_k$, $k\ge 1$, and let $\alpha$ be a Maurer--Cartan element of that algebra. One can consider the following new operations on~$L$:    
 \[
\ell^\alpha_n(x_1,\ldots,x_n):=\sum_{p\ge 0} \frac{1}{p!} \ell_{n+p}(\underbrace{\alpha, \ldots, \alpha}_p, x_1,\ldots, x_n).   
 \] 
It is known~\cite[Prop.~4.4]{Get} that the underlying vector space of $L$ equipped with the structure maps $\ell^\alpha_k$, $k\ge 1$, is again a complete $L_\infty$-algebra, denoted by $L^\alpha$. The $L_\infty$-structure of that algebra is sometimes called the \emph{$L_\infty$-structure twisted by $\alpha$}. 

\smallskip

Let us consider the $\{x,y\}$-coloured homotopy cooperad $\mathcal{V}_{\bullet\to\bullet, \infty}$ from Section~\ref{infinitymor}, and $\{x,y\}$-coloured operad 
$\End_{X_x\oplus Y_y}$. The general construction of Section~\ref{ho-coop} produces an $L_\infty$-algebra structure on the space of $\mathbb{S}$-module morphisms
 \[
\mathcal{L}_{X,Y}:=\Hom_{\mathbb{S}}(\mathcal{V}_{\bullet\to\bullet, \infty},\End_{X_x\oplus Y_y})
 \]
between them. This space of morphisms can be naturally identified with the space
\[
(h_x,h_y,h_{xy})\in\Hom_\k(\overline{\mathcal{P}^{\ac}}(X),X)\oplus\Hom_\k(\overline{\mathcal{P}^{\ac}}(Y),Y)\oplus\Hom_\k(s\mathcal{P}^{\ac}(X),Y),
\]
and in what follows we shall view this latter space as the underlying space of the $L_\infty$-algebra $\mathcal{L}_{X,Y}$. 

The following is proved in \cite{MV} for properads, and is essentially present in \cite{KS} in the case of operads.

\begin{proposition}
A triple of elements $(h_x,h_y,h_{xy})$ of the vector space
 \[
\Hom_\k(\overline{\mathcal{P}^{\ac}}(X),X)\oplus\Hom_\k(\overline{\mathcal{P}^{\ac}}(Y),Y)\oplus\Hom_\k(s\mathcal{P}^{\ac}(X),Y)
 \]
is a solution to the Maurer--Cartan equation of the $L_\infty$-algebra $\mathcal{L}_{X,Y}$ if and only if $h_x$ is a structure of a homotopy $\mathcal{P}$-algebra on $X$, $h_y$ is a structure of a homotopy $\mathcal{P}$-algebra on $Y$, and $h_{xy}$ is a homotopy morphism between these algebras.
\end{proposition}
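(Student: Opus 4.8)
The plan is to unwind the convolution $L_\infty$-structure via Formula~\eqref{l-infty} and to decompose the Maurer--Cartan equation according to the colour of the output of the corresponding endomorphism operations. Write $\mathcal{Q}:=\overline{\mathcal{P}^{\ac}}_x\oplus\overline{\mathcal{P}^{\ac}}_y\oplus s\mathcal{P}^{\ac}_f$ for the homotopy coloured cooperad in play and set $H:=(h_x,h_y,h_{xy})$. Since the only colour-changing generator $f$ has input colour $x$ and output colour $y$, every tree decorated by the generators of $\mathcal{P}_{\bullet\to\bullet,\infty}$ is \emph{colour-triangular}: its $f$-vertices form a frontier separating the $\overline{\mathcal{P}^{\ac}}_x$-decorated vertices (towards the leaves) from the $\overline{\mathcal{P}^{\ac}}_y$-decorated ones (towards the root), and every leaf-to-root path meets at most one $f$-vertex, exactly one in the trees that contribute operations $X^{\otimes n}\to Y$. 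As $\End_{X,Y}$ is a coloured operad, Formula~\eqref{l-infty} then shows that the three components of $\sum_{k\ge1}\frac1{k!}\ell_k(H,\dots,H)$, taken in $\Hom_\k(\overline{\mathcal{P}^{\ac}}(X),X)$, in $\Hom_\k(\overline{\mathcal{P}^{\ac}}(Y),Y)$ and in $\Hom_\k(s\mathcal{P}^{\ac}(X),Y)$ respectively, involve only $h_x$, only $h_y$, and all three of $h_x,h_y,h_{xy}$ (with $h_{xy}$ occurring once for each vertex of the $f$-frontier).

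First I would dispatch the two monochromatic components. The $x$-coloured sub-cooperad of $\mathcal{Q}$ is a copy of $\overline{\mathcal{P}^{\ac}}$ with its own decomposition map, and the $x$-coloured part of $\End_{X,Y}$ is $\End_X$, so the $\Hom_\k(\overline{\mathcal{P}^{\ac}}(X),X)$-component of the equation is literally the Maurer--Cartan equation of the convolution $L_\infty$-algebra $\Hom_{\mathbb{S}}(\overline{\mathcal{P}^{\ac}},\End_X)$ evaluated at $h_x$; by the standard Maurer--Cartan description of homotopy $\mathcal{P}$-algebras (degree $-1$ square-zero coderivations of the cofree $\mathcal{P}^{\ac}$-coalgebra $\mathsf{B}(X)$, see~\cite{vdL}), this holds if and only if $h_x$ makes $X$ a homotopy $\mathcal{P}$-algebra, whose bar codifferential I denote $\mathsf{d}_x$. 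The same argument with $y$ in place of $x$ takes care of $h_y$ and of the codifferential $\mathsf{d}_y$ on $\mathsf{B}(Y)$.

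It remains to analyse the mixed component, now assuming $h_x$ and $h_y$ to be homotopy $\mathcal{P}$-algebra structures. By the universal property of the cofree $\mathcal{P}^{\ac}$-coalgebra $\mathsf{B}(Y)$, the datum $h_{xy}\in\Hom_\k(s\mathcal{P}^{\ac}(X),Y)$ is the same as a morphism of graded (not yet dg) $\mathcal{P}^{\ac}$-coalgebras $\mathsf{F}\colon\mathsf{B}(X)\to\mathsf{B}(Y)$, namely the unique one whose corestriction to the cogenerators $Y$ is $s\,h_{xy}$. Expanding $\sum_{k\ge1}\frac1{k!}\ell_k(H,\dots,H)$ on the mixed component via the colour-triangular decomposition, and using that $\mathsf{d}_x,\mathsf{d}_y$ are the cofree coderivations extending $h_x,h_y$ while $\mathsf{F}$ is the cofree coalgebra morphism extending $s\,h_{xy}$, the summands regroup, after the usual sign bookkeeping, into the corestriction to $Y$ of $\mathsf{d}_y\circ\mathsf{F}-\mathsf{F}\circ\mathsf{d}_x$; the higher brackets $\ell_k$ with $k\ge2$ are exactly what is needed to account for the cofree propagation of $s\,h_{xy}$ and for the higher components of the two codifferentials. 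Since $\mathsf{d}_y\circ\mathsf{F}-\mathsf{F}\circ\mathsf{d}_x$ is an $\mathsf{F}$-coderivation, it vanishes if and only if its corestriction does; hence the mixed component of the Maurer--Cartan equation is equivalent to $\mathsf{d}_y\circ\mathsf{F}=\mathsf{F}\circ\mathsf{d}_x$, i.e.\ to $\mathsf{F}$ being a morphism of dg $\mathcal{P}^{\ac}$-coalgebras $\mathsf{B}(X)\to\mathsf{B}(Y)$, which by definition means that $s\,h_{xy}$ is a homotopy morphism of the homotopy $\mathcal{P}$-algebras $X$ and $Y$.

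The main obstacle is precisely this last regrouping. As the warning above stresses, the homotopy coloured cooperad structure carried by the summand $s\mathcal{P}^{\ac}_f$ is \emph{not} the naïve splitting of $f\circ\mathcal{P}^{\ac}$ but the one inherited from the Koszul dual of $\mathcal{P}_{\bullet\to\bullet}$, so one must check with care that it, together with the signs and suspensions appearing in~\eqref{l-infty}, reconstructs exactly the two composites $\mathsf{d}_y\circ\mathsf{F}$ and $\mathsf{F}\circ\mathsf{d}_x$ and produces nothing more. For properads this verification is carried out in~\cite{MV}, and for operads it is already implicit in~\cite{KS,vdL}; I would therefore either quote those sources or re-derive the needed identity on the cofree coalgebras $\mathsf{B}(X)$ and $\mathsf{B}(Y)$, where the nested bracketings in~\eqref{l-infty} turn into the associativity of composition of coderivations with coalgebra morphisms and the check becomes essentially formal.
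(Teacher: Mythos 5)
Your argument is sound, but note that the paper itself offers no proof of this proposition: it is stated as a known result with a pointer to \cite{MV} (for properads) and \cite{KS,vdL} (for operads), so there is no in-text argument to compare against. Your sketch is precisely the standard unwinding that those references carry out: split the Maurer--Cartan equation in $\Hom_{\mathbb{S}}(\overline{\mathcal{P}^{\ac}}_x\oplus\overline{\mathcal{P}^{\ac}}_y\oplus s\mathcal{P}^{\ac}_f,\End_{X,Y})$ according to the three summands of the source, observe that the two monochromatic components reduce to the usual twisting-morphism equation for $h_x$ and $h_y$ (here the $\ell_k$ with $k\ge 3$ even vanish, since $\overline{\mathcal{P}^{\ac}}_x$ and $\overline{\mathcal{P}^{\ac}}_y$ are honest sub-cooperads), and identify the mixed component with the corestriction of $\mathsf{d}_y\circ\mathsf{F}-\mathsf{F}\circ\mathsf{d}_x$. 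One remark that defuses the ``main obstacle'' you flag at the end: the homotopy coloured cooperad structure on $s\mathcal{P}^{\ac}_f$, while not the na\"ive splitting of $f\circ\mathcal{P}^{\ac}$, is supported only on two kinds of trees --- two-vertex trees whose root is an $f$-vertex and whose other vertex is an $x$-vertex, and two-level trees whose root is a $y$-vertex and whose children are all $f$-vertices (this is spelled out, in the $\Omega^\vee$-decorated setting, in Section~\ref{concordance-operad}). So your ``colour-triangular frontier'' picture, though correct about which decorated trees can exist, overcounts the trees that actually index nonzero decomposition maps; the regrouping into $\mathsf{F}\circ\mathsf{d}_x$ (from the first kind, one $h_{xy}$ and one $h_x$) and $\mathsf{d}_y\circ\mathsf{F}$ (from the second kind, one $h_y$ and $k$ copies of $h_{xy}$, matching \eqref{Lxy-brackets}) is therefore immediate rather than a delicate resummation, with only the Koszul signs left to track.
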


Moreover, for two given homotopy $\mathcal{P}$-algebra structures on $X$ and $Y$, it is possible to describe homotopy morphisms between the corresponding algebras in the same way. Suppose that $X$ and $Y$ are two homotopy $\mathcal{P}$-algebras, so that the algebra structures are enconded by the elements $h_x\in\Hom_\k(\overline{\mathcal{P}^{\ac}}(X),X)$ and $h_y\in\Hom_\k(\overline{\mathcal{P}^{\ac}}(Y),Y)$ respectively. Since the zero map is manifestly a homotopy morphism, the triple $\alpha=(h_x,h_y,0)$ is a Maurer--Cartan element of the $L_\infty$-algebra
$\mathcal{L}_{X,Y}$.

\begin{proposition}\label{def-of-L}
In the twisted $L_\infty$-algebra $\mathcal{L}_{X,Y}^\alpha$, the subspace
 \[
\mathcal{L}(X,Y):= \Hom_\k(s\mathcal{P}^{\ac}(X),Y)
 \]
is an $L_\infty$-subalgebra. Solutions to the Maurer--Cartan equation in that subalgebra are in one-to-one correspondence with homotopy morphisms between $X$ and $Y$.
\end{proposition}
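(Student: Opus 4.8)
First I would show that $\mathcal{L}(X,Y)$ is a sub-$L_\infty$-algebra of $\mathcal{L}_{X,Y}^\alpha$, i.e. that $\ell_n^\alpha\bigl(\mathcal{L}(X,Y)^{\otimes n}\bigr)\subseteq\mathcal{L}(X,Y)$ for every $n\ge 1$, after which the rest is a formal consequence of the description of Maurer--Cartan elements of $\mathcal{L}_{X,Y}$ recalled above together with the standard behaviour of Maurer--Cartan elements under twisting. Unravelling the twisted brackets, $\ell_n^\alpha(\phi_1,\dots,\phi_n)$ is a sum of terms $\ell_{n+p}(\alpha,\dots,\alpha,\phi_1,\dots,\phi_n)$, and by \eqref{l-infty} each such term is obtained by applying a component $\Delta_t$ of the decomposition map of the homotopy coloured cooperad $\overline{\mathcal{P}^{\ac}}_x\oplus\overline{\mathcal{P}^{\ac}}_y\oplus s\mathcal{P}^{\ac}_f$, plugging the $n+p$ arguments into the vertices of the tree $t$, and composing in $\End_{X,Y}$. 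The key point is chromatic: a cogenerator lying in $\overline{\mathcal{P}^{\ac}}_x$ has all of its inputs and its output of colour $x$, so --- colours being forced along the tree from the root up --- every tree occurring in its decomposition has all vertices decorated by $\overline{\mathcal{P}^{\ac}}_x$, and dually for $\overline{\mathcal{P}^{\ac}}_y$. Since each $\phi_i\in\mathcal{L}(X,Y)$ is supported only on the colour-$f$ cogenerators, while $\alpha=(h_x,h_y,0)$ is supported only on the $x$- and $y$-coloured cogenerators, obtaining a nonzero contribution to the $\overline{\mathcal{P}^{\ac}}_x$- (respectively $\overline{\mathcal{P}^{\ac}}_y$-) component of $\ell_n^\alpha(\phi_1,\dots,\phi_n)$ would require all $n$ of the $\phi_i$ to be inserted into a tree all of whose vertices are $x$- (respectively $y$-)coloured, which is impossible for $n\ge 1$. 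Hence both of these components vanish and $\ell_n^\alpha(\phi_1,\dots,\phi_n)$ lies in $\Hom_\k(s\mathcal{P}^{\ac}(X),Y)=\mathcal{L}(X,Y)$. Along the way I would record that $\mathcal{L}(X,Y)^\alpha$ is complete with respect to the filtration by arity: since $\overline{\mathcal{P}^{\ac}}(1)=0$, the element $\alpha$ is supported in arities $\ge 2$, so every bracket $\ell_n^\alpha$ with $n\ge 2$ and the twist-correction terms in $\ell_1^\alpha$ all strictly raise the arity; in particular the Maurer--Cartan series converges, with only finitely many summands contributing in each fixed arity.

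With the subalgebra established, the identification of Maurer--Cartan elements is short. Since $\mathcal{L}(X,Y)$ is a sub-$L_\infty$-algebra of $\mathcal{L}_{X,Y}^\alpha$, an element $h_{xy}$, regarded as the triple $(0,0,h_{xy})$, solves the Maurer--Cartan equation of $\mathcal{L}(X,Y)^\alpha$ if and only if it solves that of $\mathcal{L}_{X,Y}^\alpha$. By the standard fact that twisting by $\alpha$ translates Maurer--Cartan elements by $\alpha$ --- namely $\beta\in\MC(L^\alpha)$ if and only if $\alpha+\beta\in\MC(L)$, see \cite{Get} --- the latter is equivalent to $(h_x,h_y,h_{xy})=\alpha+(0,0,h_{xy})$ being a Maurer--Cartan element of $\mathcal{L}_{X,Y}$. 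Finally, by the earlier Proposition describing the Maurer--Cartan elements of $\mathcal{L}_{X,Y}$, and using that $h_x$ and $h_y$ are by construction the prescribed homotopy $\mathcal{P}$-algebra structures on $X$ and $Y$, this holds precisely when $s\,h_{xy}$ is a homotopy morphism between $X$ and $Y$. Therefore $h_{xy}\mapsto s\,h_{xy}$ is the asserted bijection between $\MC(\mathcal{L}(X,Y)^\alpha)$ and the set of homotopy morphisms from $X$ to $Y$.

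The only genuinely delicate point is the first step, and specifically the concern --- flagged in the remark preceding this proposition --- that the homotopy coloured cooperad structure on $s\mathcal{P}^{\ac}_f$ is more complicated than the na\"ive splitting of $f\circ\mathcal{P}^{\ac}$. This turns out to be irrelevant here, because the argument uses only the colours of the inputs and the output of each cogenerator, and these are dictated by the colours of the generators $\mathcal{V}_x$, $\mathcal{V}_y$ and $f$ of the coloured operad $\mathcal{P}_{\bullet\to\bullet}$, hence are unaffected by the precise shape of the decomposition maps; only the support on a given colour, not the explicit formulae, enters the argument. A secondary point worth a line of care is the completeness of the subalgebra, which the arity filtration handles as indicated above.
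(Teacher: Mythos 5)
Your proposal is correct and follows essentially the same route as the paper: the paper's proof observes that the decomposition maps of $\overline{\mathcal{P}^{\ac}}_x$ and $\overline{\mathcal{P}^{\ac}}_y$ never produce $f$-coloured cogenerators, so that $\mathcal{L}(X,Y)$ is in fact an ideal of the untwisted $\mathcal{L}_{X,Y}$ --- which is precisely your colour-support argument --- and then deduces closure under the twisted brackets and identifies Maurer--Cartan elements via the same translation-by-$\alpha$ observation combined with the preceding proposition.
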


\begin{proof}
First, using the homotopy cooperad structure on $\mathcal{V}_{\bullet\to\bullet, \infty}$, one can see by direct inspection that if we put $\beta=(0,0,h_{xy})\in \mathcal{L}_{X,Y}$, then the element $\alpha+\beta$ is a Maurer--Cartan element of $\mathcal{L}_{X,Y}$ if and only if $\beta$ is a Maurer--Cartan element of $\mathcal{L}_{X,Y}^\alpha$. Therefore, if we check that $\mathcal{L}(X,Y)$ is an $L_\infty$-subalgebra of $\mathcal{L}_{X,Y}^\alpha$, the statement follows. In fact, it is possible to show that $\mathcal{L}(X,Y)$ is an ideal of $\mathcal{L}_{X,Y}$, that is $\ell_k(x_1,\ldots,x_k)$ is in $\mathcal{L}(X,Y)$ whenever at least one of the arguments is. Indeed, the decomposition maps of $\overline{\mathcal{P}^{\ac}}_{(x\to x)}$ and $\overline{\mathcal{P}^{\ac}}_{(y\to y)}$ do not produce elements from $s\mathcal{P}^{\ac}_{(x\to y)}$, therefore the first two components of the element $\ell_k(x_1,\ldots,x_k)$ of
 \[
\Hom_\k(\overline{\mathcal{P}^{\ac}}(X),X)\oplus\Hom_\k(\overline{\mathcal{P}^{\ac}}(Y),Y)\oplus\Hom_\k(s\mathcal{P}^{\ac}(X),Y)
 \]
vanish whenever at least one of the $x_i$ is in $\mathcal{L}(X,Y)$. This implies that $\mathcal{L}(X,Y)$ is a subalgebra of the twisted algebra, since the twisted operations are made up of the original ones, and in each term at least one of the arguments belongs to the subspace $\mathcal{L}(X,Y)$.\qed
\end{proof}

It is easy to use our formulae to obtain explicit formulae for the structure maps of the $L_\infty$-algebra $\mathcal{L}(X,Y)$. Its differential is given by the formula
\begin{equation}\label{Lxy-diff}
\ell_1(\phi)(sx)=(d^{(1)}_Y\circ\phi)(sx)+(-1)^{|\phi|} (\phi\circ sD_X)(x) ,
\end{equation}
and for $k>1$, the structure maps $\ell_k$ are given by
\begin{equation}\label{Lxy-brackets}
\ell_k(\phi_1,\ldots,\phi_k)(sx)=\sum_{\sigma\in\mathbb{S}_k} (d^{(k)}_Y\circ(\id\otimes\phi_{\sigma(1)}\otimes\cdots\otimes\phi_{\sigma(k)})\circ (1\otimes s^{\otimes k})\circ \Delta_X^{k-1})(x),
\end{equation}
where
 \[
\Delta_X^{k-1}\colon \mathcal{P}^{\ac}(X)\to \mathcal{P}^{\ac}(k)\otimes_{\mathbb{S}_k}\mathcal{P}^{\ac}(X)^{\otimes k}
 \]
is the $k^\text{th}$ cooperation in the cofree $\mathcal{P}^\ac$-coalgebra $\mathcal{P}^{\ac}(X)$.

\section{Overview of existing notions of homotopies}\label{homotopies}

\subsection{Concordance}

The definition in this section originates from a classical geometric picture: if \[f\colon X\times I\to Y\] is a homotopy connecting the two given manifold maps $p(\cdot)=f(\cdot,0)$ and $q(\cdot)=f(\cdot,1)$ between smooth manifolds $X$ and $Y$, then $p$ and $q$ induce the same map on the cohomology. This is proved by constructing a chain homotopy between $p$ and $q$. Let us briefly recall the way it is done. The map $f$ induces a morphism of de Rham complexes \begin{equation}f^*\colon\Omega^\bullet(Y)\to\Omega^\bullet(X)\otimes\Omega^\bullet(I)\label{tensforms}\end{equation} (if we can work with algebraic differential forms, so that $\Omega^\bullet(X\times I)\simeq\Omega^\bullet(X)\otimes\Omega^\bullet(I)$), and is determined by two maps $f_0,f_1\colon \Omega^\bullet(Y)\to \Omega^\bullet(X)\otimes\Omega^0(I)$ with 
 \[
f^*(c)=f_0(c)+f_1(c)dt  
 \]
for each $c\in\Omega^\bullet(Y)$. Writing down the condition for $f^*$ to be a map of chain completes, we observe that
 \[
(-1)^{|c|}\dot{f_0}(c)\,dt=-d_X(f_1(c))\,dt+ f_1(d_Y(c))\,dt,
 \]
and integrating this equation over $I$ gives
 \[
q^*-p^*=d_X h+h d_Y,
 \]
where $h(c)=(-1)^{|c|-1}\int_I f_1(c)\,dt$. 

It is very natural to apply a similar approach to homotopy algebras. Note that tensoring with $\Com$ does not change the operad, so, for example, if a chain complex $V$  has a structure of a homotopy $\mathcal{P}$-algebra, the tensor product $V\otimes \Omega^\bullet(I)$ is a homotopy $\mathcal{P}$-algebra as well. For each structure map $\lambda$, we have  
 \[
\lambda(v_1\otimes\omega_1,\ldots,v_n\otimes\omega_n)=\pm\lambda(v_1,\ldots,v_n)\otimes (\omega_1\wedge\cdots\wedge\omega_n) ,
 \] 
with the sign determined by the Koszul sign rule. In what follows, we develop this idea, denoting $\Omega^\bullet(I)$ by $\Omega$ for brevity.

\begin{definition}\label{Concordance}
Two homotopy morphisms $p,q$ between two homotopy $\mathcal{P}$-algebras $X,Y$ are said to be \emph{concordant} if there exists a morphism $\phi$ of dg $\mathcal{P}^\ac$-coalgebras
 \[
\phi\colon \mathsf{B}(X)\to \mathsf{B}(Y\otimes\Omega)
 \]
for which $p(v)=\phi(v)|_{t=0}$ and $q(v)=\phi(v)|_{t=1}$ whenever $v\in\mathsf{B}(X)$. 
\end{definition}

\begin{remark}
One of the first fundamental results of formal deformation theory states that for an $L_\infty$-algebra $L$ and Artinian local algebra~$A$, there exists a bijection between the set of Maurer--Cartan elements of the $L_\infty$-algebra $L\otimes A$ and the set of all dg coalgebra morphisms from $\mathfrak{m}_A^*$ to the bar complex $\mathsf{B}(L)$ (see, e.g., Drinfeld's letter to Schechtman on deformation theory~\cite{Dr88}). In a sense, the notion of concordance may be thought as an attempt to use this definition with $A$ being the dg algebra $\Omega$, which however is not Artinian so various precautions and reformulations are required. 
\end{remark}

\begin{remark}
In the case $\mathcal{P}=\Lie$, this definition of concordance is closely related to that from~\cite{SSS}. The main difference is that there cobar complexes are used, and hence one needs to dualise algebras in question. In the case of infinite dimensional vector spaces, this would create various technical problems, and hence we chose to alter the definition. In our case, such a map is determined by its corestriction $\mathsf{B}(X)\to Y\otimes\Omega$ which has to satisfy a certain equation (compatibility with differentials), while in~\cite{SSS}, concordance is defined via a map of cobar complexes $\Omega(Y^*)\to \Omega(X^*)\otimes\Omega$ (reminiscent of the ``geometric'' map above) which is determined by its restriction $Y^*\to \Omega(X^*)\otimes\Omega$ subject to compatibility with differentials. One easily checks that if the algebras $X$ and $Y$ are finite-dimensional, then in both definitions the data involved and the conditions on that data are exactly the same. 
\end{remark}

\subsection{Homotopy of Maurer--Cartan elements of $L_\infty$-algebras}

In this section, we outline the notions of homotopy between Maurer--Cartan elements of homotopy Lie algebras. 

\smallskip

\noindent
\textbf{Warning. } We would like remind the reader that the letter $L$ always denotes a complete $L_\infty$-algebra. For such an algebra, we shall use, on several occasions, notation like $L[t]=L\otimes\k[t]$, or more generally $L\otimes A$, where $A$ is some finitely generated differential graded algebra. In such cases, we shall implicitly mean that instead of those spaces we shall work with their completions with respect to the filtration derived from the filtration on $L$ for which $L$ is complete. (If $L$ is nilpotent as in \cite{Get}, then no such completion would of course be needed, but for $L$ complete  it is necessary). 

\smallskip

One available approach to equivalence of Maurer--Cartan elements is inspired by rational homotopy theory. Namely, if one considers a simplicial differential graded commutative associative algebra $\Omega_\bullet$ whose $n$-simplices are differential forms on the $n$-simplex $\Delta^n$, then one can prove, under appropriate finiteness assumptions, that for a differential graded Lie algebra $L$, the set of homomorphisms of differential graded algebras from the cohomological Chevalley--Eilenberg complex $C^*(L)$ to $\Omega_n$ is naturally identified with Maurer--Cartan elements of $L\otimes\Omega_n$. This suggests to introduce a simplicial set $\MC_\bullet(L)$ by the formula
  $$
 \MC_\bullet(L)=\MC(L\otimes\Omega_\bullet),
  $$
and that set is in some sense is the main protagonist of rational homotopy theory, connecting homotopy theory of nilpotent differential graded Lie algebras and that of nilpotent rational topological spaces. 

In \cite{Get}, Getzler proposed to study a simplicial set $\gamma_\bullet(L)$ which is smaller than $\MC_\bullet(L)$ but carries the same homotopy information. The main ingredient in his construction is the Dupont\rq{}s \cite{Dup76} chain homotopy $s_\bullet\colon\Omega_\bullet^\bullet\to\Omega_\bullet^{\bullet-1}$; by definition,
  $$
 \gamma_\bullet(L):=\{\alpha\in\MC_\bullet(L)\colon s_\bullet(\alpha)=0\}.
  $$

\paragraph{Quillen homotopy.}

The notion of Quillen homotopy equivalence of Maurer--Cartan elements also uses the de Rham algebra $\Omega=\Omega_1=\Omega^\bullet(I)$ and its two evaluation morphisms $\phi_s\colon(\Omega,d)\to(\k,0)$, $s\in\{0,1\}$, given by $\phi_s(t)=s$, where $t$ is, as above, the coordinate in $I$. 
The motivation for this definition is geometric: if $L$ is a model of a pointed space $Y$ in the sense of rational homotopy theory, then, as pointed out in~\cite{BM2}, $L\otimes\Omega$ is a model of the evaluation fibration $ev\colon\mathop{\mathrm{map}}^*(I,Y)\to Y$, $ev(\gamma)=\gamma(1)$.

\begin{definition}
Two elements $\alpha_0,\alpha_1\in\MC(L)$ are said to be \emph{Quillen homotopic} if there exists $\beta\in\MC(L\otimes\Omega)$ for which $\phi_0(\beta)=\alpha_0$, $\phi_1(\beta)=\alpha_1$.
\end{definition}

\paragraph{Gauge homotopy.}

The set $\MC(L)$ under appropriate finiteness assumptions acquires a structure of a scheme, see \cite{Sch98}. It is well understood that the right notion of ``gauge symmetries'' of $\MC(L)$, for $L$ being a dg Lie algebra, is given by the group associated to the Lie algebra~$L_0$, see \cite{GM} for details. So it is natural to look for a similar concept in the general case of $L_{\infty}$-algebras. The corresponding theory was systematically developed by Getzler~\cite{Get}. Application of these $L_\infty$-gauge symmetries to studying homotopies between morphisms of $L_{\infty}$-algebras goes back to~\cite{Dol07}.

The following statement is contained in~\cite{Get}; however, there it is a consequence of much more general results, so for the convenience of the reader we present a more hands-on proof.

\begin{proposition}
Let $L$ be an $L_\infty$-algebra, and $x\in L_0$. The vector field $V_x$ on $L_{-1}$ defined by
 \[
V_x(\alpha)=-\ell^\alpha_1(x)
 \]
is a tangent vector field of the set of Maurer--Cartan elements of $L$.
\end{proposition}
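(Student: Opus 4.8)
The plan is to show that for each $\alpha\in\MC(L)$, the vector $V_x(\alpha)=-\ell^\alpha_1(x)$ is tangent to the Maurer--Cartan scheme, i.e.\ that moving $\alpha$ infinitesimally in the direction $V_x(\alpha)$ preserves the Maurer--Cartan equation to first order. Concretely, writing the Maurer--Cartan function as $F(\alpha)=\sum_{k\ge 1}\frac{1}{k!}\ell_k(\alpha,\ldots,\alpha)$, I need to verify that the differential of $F$ at a point $\alpha\in\MC(L)$, applied to $V_x(\alpha)$, vanishes. A direct computation shows $dF_\alpha(v)=\sum_{k\ge 1}\frac{1}{(k-1)!}\ell_k(\alpha,\ldots,\alpha,v)=\ell^\alpha_1(v)$, using the symmetry of the brackets (the sign bookkeeping is harmless here since $\alpha$ has degree $-1$ and $x$ has degree $0$). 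So the claim reduces to the identity $\ell^\alpha_1\big(\ell^\alpha_1(x)\big)=0$ for any $x\in L_0$, whenever $\alpha\in\MC(L)$.

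The key step is therefore the observation that the twisted structure maps $\ell^\alpha_k$ make $L^\alpha$ into a genuine complete $L_\infty$-algebra, and in particular that $\ell^\alpha_1$ is a square-zero differential \emph{precisely because $\alpha$ satisfies the Maurer--Cartan equation}; this is exactly the content of \cite[Prop.~4.4]{Get} recalled in the excerpt. So $(\ell^\alpha_1)^2=0$ on all of $L^\alpha$, hence certainly on $L_0$, which yields $\ell^\alpha_1(V_x(\alpha))\in\ker\ell^\alpha_1$, i.e.\ $\ell^\alpha_1(\ell^\alpha_1(x))=0$. (Degree-wise $x\in L_0$ so $\ell^\alpha_1(x)\in L_{-1}$, the right space in which to be a tangent vector to $\MC(L)\subset L_{-1}$, and $\ell^\alpha_1(\ell^\alpha_1(x))\in L_{-2}$, which is where the linearised Maurer--Cartan equation lives.) Combining this with the formula $dF_\alpha=\ell^\alpha_1$ derived above gives $dF_\alpha(V_x(\alpha))=0$, which is the assertion that $V_x$ is tangent to $\MC(L)$ at $\alpha$.

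An alternative, more self-contained route avoids invoking \cite{Get} for the square-zero property: one expands $\ell^\alpha_1(\ell^\alpha_1(x))=\sum_{p,q\ge 0}\frac{1}{p!\,q!}\ell_{1+p}\big(\alpha,\ldots,\alpha,\ell_{1+q}(\alpha,\ldots,\alpha,x)\big)$ and regroups by the total number $n=p+q$ of copies of $\alpha$; the resulting coefficient of the terms with $n$ insertions of $\alpha$ is, by the generalised Jacobi identities of an $L_\infty$-algebra together with $F(\alpha)=0$, seen to vanish. This is the computation that would take some care with Koszul signs, but since $\alpha$ is concentrated in degree $-1$ and $x$ in degree $0$ the signs simplify considerably. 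Given that the excerpt explicitly permits us to use the twisted $L_\infty$-algebra $L^\alpha$ and its defining property, I would take the first route and relegate the sign verification in $dF_\alpha=\ell^\alpha_1$ to a one-line remark.

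The main obstacle I anticipate is purely bookkeeping: making sure that all the infinite sums involved genuinely converge (which they do, since $L$ is complete and $\alpha\in F^1L$, so only finitely many terms contribute modulo each $F^rL$), and that the symmetrisation signs in passing from $F$ to its differential and in the square-zero identity are handled consistently with the convention for $\ell^\alpha_k$. No conceptual difficulty is expected beyond that.
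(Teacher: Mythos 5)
Your proposal is correct and follows essentially the same route as the paper: the tangent space to $\MC(L)$ at $\alpha$ is cut out by the linearised Maurer--Cartan equation, which is exactly $\ell^\alpha_1(\beta)=0$, and then $\ell^\alpha_1(V_x(\alpha))=-(\ell^\alpha_1)^2(x)=0$ because the twisted structure $L^\alpha$ is again an $L_\infty$-algebra (the paper likewise invokes \cite[Prop.~4.4]{Get} for this). Your alternative self-contained expansion regrouped by the number of copies of $\alpha$ is the unpacked version of the same fact and is not needed.
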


\begin{proof} Note that the tangent vectors $\beta\in L_{-1}$ to $\MC(L)$ at a point $\alpha$ are characterized by
 \[
\sum_{p\ge0}\frac{1}{p!}\ell_{p+1}(\underbrace{\alpha,\ldots,\alpha,}_{p \text{ times}}\beta)=0,
 \]
that is
\[
\ell^\alpha_1(\beta)=0.
\]
The value of $V_x$ at $\alpha$ satisfies this condition since
\[
\ell^\alpha_1(V_x(\alpha))=\ell^\alpha_1(-\ell^\alpha_1(x))=-(\ell^\alpha_1)^2(x)=0,
\]
which completes the proof.\qed
\end{proof}

Let $\alpha$ be a Maurer--Cartan element of $L$, and consider the the integral curve $\alpha(t)$ of $V_x$ starting at $\alpha$, that is the solution of the differential equation
\[
\frac{d\alpha}{dt}+\ell^\alpha_1(x)=0
\]
satisfying the initial condition $\alpha(0)=\alpha$. (This solution is an element of $L[t]$ (completed as usual), an explicit formula for it is given in \cite[Prop.~5.7]{Get}). The previous result implies the following statement. 

\begin{corollary}
We have $\alpha(t)\in\MC(L[t])$. Also, for each $t$ the element $\alpha(t)$ is an element of $\MC(L)$.
\end{corollary}

This suggests a meaningful definition of gauge homotopy.

\begin{definition}
Two elements $\alpha_0,\alpha_1\in\MC(L)$ are said to be \emph{gauge homotopic} if for some $x\in L_0$ there exists an integral curve $\alpha(t)$ of $V_x$ with $\alpha(0)=\alpha_0$ and $\alpha(1)=\alpha_1$.
\end{definition}

In Section~\ref{Quillen-and-gauge} below, we shall explain why two Maurer--Cartan elements are gauge homotopic if and only if they are Quillen homotopic. That was proved in \cite{Man1} for dg Lie algebras. In~\cite{Dol07}, this statement is needed in the full generality for $L_\infty$-algebras; however, the proof given there formally proves a somewhat weaker statement, so we fill that gap here rather than merely referring the reader to \cite{Dol07}. 

\paragraph{Cylinder homotopy.}

The main motivation for the definition of this section is as follows. Consider the quasi-free dg Lie algebra $\mathfrak{l}$ with one generator $x$ of degree $-1$ and the differential $d$ given by $dx=-\frac{1}{2}[x,x]$.
Note that for a dg Lie algebra $L$ the set of Maurer--Cartan elements can be identified with the set of dg Lie algebra morphisms from $\mathfrak{l}$ to $L$. Thus, if in the homotopy category of dg Lie algebras we can come up with a cylinder object for $\mathfrak{l}$, the homotopy relation for Maurer--Cartan elements can be defined using that cylinder. It turns out that a right cylinder is given by the \emph{Lawrence--Sullivan construction}.

The Lawrence--Sullivan Lie algebra $\mathfrak{L}_{LS}$ is a (pronilpotent completion of a) certain quasi-free Lie algebra, that is, a free graded Lie algebra with a differential $d$ of degree $-1$ satisfying $d^2=0$ and the Leibniz rule. It is freely generated by the elements $a,b,z$, where $|a|=|b|=-1$, $|z|=0$, and
\begin{gather*}
da+\frac12[a,a]=db+\frac12[b,b]=0,\\
dz=[z,b]+\sum_{k\ge 0} \frac{B_k}{k!}\ad_z^k(b-a)=\ad_z(b)+\frac{\ad_z}{\exp(\ad_z)-1}(b-a),
\end{gather*} where the $B_k$ are the Bernoulli numbers.
It is indeed shown in \cite{BM2} that this algebra gives the right cylinder object for $\mathfrak{l}$ in the homotopy category of dg Lie algebras, hence the following definition.

\begin{definition}
Two elements $\alpha_0,\alpha_1\in\MC(L)$ are said to be \emph{cylinder homotopic} if there exists an $L_\infty$-morphism from $\mathfrak{L}_{LS}$ to $L$ which takes $a$ to~$\alpha_0$ and $b$ to~$\alpha_1$.
\end{definition}

It turns out that the arising notion of homotopy for Maurer--Cartan elements is equivalent to the other ones available.

\begin{proposition}[{\cite[Prop.~4.5]{BM1}}]
Two Maurer--Cartan elements of an $L_\infty$-algebra are cylinder homotopic if and only if they are Quillen homotopic.
\end{proposition}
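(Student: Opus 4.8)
\emph{Overall strategy.} I would deduce the equivalence from the classical dichotomy between left homotopy, detected by a cylinder object, and right homotopy, detected by a path object, applied to the quasi-free dg Lie algebra $\mathfrak{l}$ introduced above. The starting point is that $\mathfrak{l}$ corepresents the Maurer--Cartan functor: for a complete $L_\infty$-algebra $L$, with morphisms understood as $\infty$-morphisms, an element $\alpha\in\MC(L)$ is the same as a morphism $\mathfrak{l}\to L$ sending the generator $x$ to $\alpha$, and, by the universal property of the coproduct, $\mathfrak{l}\sqcup\mathfrak{l}$ corepresents pairs of Maurer--Cartan elements. Under this dictionary a cylinder homotopy between $\alpha_0$ and $\alpha_1$ is exactly a map out of a specific cylinder for $\mathfrak{l}$, and a Quillen homotopy is exactly a map into a specific path object for $L$; the theorem then follows from the general fact that these two relations coincide as soon as the source is cofibrant and the target fibrant.

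\emph{Key steps.} First I would make the corepresentability precise, unravelling an $\infty$-morphism $\mathfrak{l}\to L$ to see that the datum it carries is precisely a solution of the Maurer--Cartan equation, so that a cylinder homotopy between $\alpha_0$ and $\alpha_1$ is the same as a factorisation of the induced map $\mathfrak{l}\sqcup\mathfrak{l}\to L$ through the Lawrence--Sullivan diagram $\mathfrak{l}\sqcup\mathfrak{l}\hookrightarrow\mathfrak{L}\twoheadrightarrow\mathfrak{l}$, in which the two structure maps $\mathfrak{l}\to\mathfrak{L}$ are $x\mapsto a$ and $x\mapsto b$. Next I would invoke the theorem of Buijs and Murillo, recalled above, that $\mathfrak{L}$ is a cylinder object for $\mathfrak{l}$ in dg Lie algebras, and upgrade it to complete $L_\infty$-algebras: all objects in the diagram being quasi-free, hence cofibrant, and $\infty$-morphisms out of a quasi-free object computing the correct mapping space, the diagram remains a cylinder, so cylinder homotopy coincides with left homotopy with respect to $\mathfrak{L}$. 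Then I would check that $L\to L\otimes\Omega\xrightarrow{(\phi_0,\phi_1)}L\times L$ is a path object for $L$: the first map is a weak equivalence because $\Omega$ is quasi-isomorphic to $\k$ (the interval being contractible) and tensoring with a fixed complex preserves quasi-isomorphisms, while $(\phi_0,\phi_1)\colon\Omega\to\k\times\k$ is a degreewise surjection of dg commutative algebras --- constants already surject onto any prescribed pair of values in degree zero, and positive degrees surject onto zero --- so $(\phi_0,\phi_1)_*$ is a fibration; hence a Quillen homotopy between $\alpha_0$ and $\alpha_1$ is the same as a right homotopy of the corresponding maps $\mathfrak{l}\to L$ through this path object. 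Finally, $\mathfrak{l}$ is cofibrant and every complete $L_\infty$-algebra is fibrant, so left homotopy (for any cylinder) and right homotopy (for any path object) define one and the same relation on $\MC(L)=\Hom(\mathfrak{l},L)$, and the three previous steps identify that common relation with both cylinder homotopy and Quillen homotopy.

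\emph{Main obstacle.} The genuinely delicate step is the transition from the dg Lie category, where $\mathfrak{L}$ is known to be a cylinder, to complete $L_\infty$-algebras with $\infty$-morphisms, in which cylinder homotopy is actually phrased: one must be certain that $\infty$-morphisms out of $\mathfrak{L}$ see the correct homotopy relation and that every $L_\infty$-algebra in play behaves as a fibrant object, which requires pinning down a precise homotopical framework. If one prefers to avoid this machinery there is a fully explicit alternative: given $\beta\in\MC(L\otimes\Omega)$, apply the homotopy transfer theorem along the Dupont contraction of $\Omega^\bullet(I)$ onto its small cellular model, a homotopy retract, and use the computation of Cheng and Getzler --- in the dual, Lie-theoretic form due to Buijs and Murillo --- identifying the transferred structure with the Lawrence--Sullivan one, the Bernoulli numbers appearing as structure constants; the two evaluations $\phi_0,\phi_1$ transfer to the two vertex projections and return $\alpha_0,\alpha_1$. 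The real work along that route lies in the Bernoulli-number identity and in making the infinite series it involves rigorous --- exactly the kind of convergence bookkeeping for which this paper builds its framework.
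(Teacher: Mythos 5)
The paper itself does not prove this proposition: it is imported verbatim from Buijs--Murillo \cite[Prop.~4.5]{BM1}, so there is no internal proof to compare your attempt against. That said, your first strategy --- corepresent $\MC(L)$ by $\mathfrak{l}$, exhibit $\mathfrak{l}\sqcup\mathfrak{l}\hookrightarrow\mathfrak{L}\twoheadrightarrow\mathfrak{l}$ as a cylinder and $L\to L\otimes\Omega\to L\times L$ as a path object, and invoke the coincidence of left and right homotopy for a cofibrant source and fibrant target --- is essentially the conceptual skeleton behind the cited result, and the individual ingredients you list (surjectivity of $(\phi_0,\phi_1)$, contractibility of $\Omega$, quasi-freeness of $\mathfrak{L}$) are all correct. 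The one place where the argument as written is not yet a proof is exactly the one you flag: cylinder homotopy is defined via an $\infty$-morphism $\mathfrak{L}\to L$ with $L$ an arbitrary complete $L_\infty$-algebra, so the statement does not live in the model category of dg Lie algebras where the left/right homotopy dichotomy is available off the shelf. The category of complete $L_\infty$-algebras with $\infty$-morphisms is not itself a model category (it lacks the requisite limits and colimits), so one must either rectify $L$ and verify that $\infty$-morphisms out of the quasi-free objects $\mathfrak{l}$ and $\mathfrak{L}$ compute the correct homotopy classes, or argue simplicially through $\MC_\bullet(-)$ and Getzler's $\gamma_\bullet(-)$ --- which is in effect what the cited reference does; until one of these is pinned down, the phrase ``left homotopy for any cylinder equals right homotopy for any path object'' has no licensed meaning in the category where the proposition is stated. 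Your second, explicit route --- transferring along the contraction of $\Omega^\vee$ onto $C_\bullet([0,1])$ and identifying the transferred structure with the Lawrence--Sullivan one, Bernoulli numbers and all --- is precisely the computation that this paper makes rigorous in Propositions~\ref{retract} and~\ref{toy-example}; carrying it out amounts to reproving \cite{BM2} rather than quoting it, which is legitimate but substantially longer, and the convergence bookkeeping you mention is genuinely the hard part there.
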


In what follows, we shall use as a toy example the homotopy coassociative algebra $A_{LS}$ that corresponds to the differential on the universal enveloping algebra of~$\mathfrak{L}_{LS}$. This $A_\infty$ coalgebra is defined on the linear span of the elements $u=sa$, $v=sb$, $w=sz$, where $|u|=|v|=0$, $|w|=1$, and is explicitly given by
\begin{gather*}
\delta_1(w)=u-v, \quad \delta_1(u)=\delta_1(v)=0,\\
\delta_2(w)=-\frac12w\otimes(u+v)-\frac12(u+v)\otimes w,\quad \delta_2(u)=-u\otimes u, \quad \delta_2(v)=-v\otimes v,\\
\delta_k(w)=-\sum_{p+q=k-1}\frac{b_{k-1}}{p!q!}w^{\otimes p}\otimes(u-v)\otimes w^{\otimes q},\quad \delta_k(u)=\delta_k(v)=0, k\ge 3.
\end{gather*}

\subsection{Operadic homotopy}

Let us recall the operadic approach to homotopies between homotopy morphisms \cite{Doubek,Mar2}. Recall that homotopy $\mathcal{P}$-algebras are algebras over operad $\mathcal{P}_\infty=\Omega(\mathcal{P}^\ac)$, the minimal model of the operad $\mathcal{P}$, and homotopy morphisms between homotopy algebras are algebras over the minimal model $\mathcal{P}_{\bullet\to\bullet,\infty}$ of the coloured operad  $\mathcal{P}_{\bullet\to\bullet}$ encoding morphisms of $\mathcal{P}$-algebras. One hopes to include these results in a hierarchy of results that would incorporate higher homotopies as well, but the situation is somewhat subtle. 

In \cite{Mar2}, an operadic approach to homotopies between morphisms is outlined. Let us state a version of that approach which is inspired by \cite[Th.~18]{Mar2}. Our formulae are different in two ways. First, we restrict ourselves to the case of a Koszul operad $\mathcal{P}$, and as a consequence are able to make some formulae more precise. Second, we work with homotopy cooperads as opposed to quasi-free operads, therefore some (de)suspensions make signs in our formulae differ from those of \cite{Mar2}, and the differential is separated from the decomposition maps. 

\begin{definition}
We say that the operad $\mathcal{P}$ satisfies the \emph{homotopy hypothesis} if  there exists a quasi-free resolution of the operad $\mathcal{P}_{\bullet\to\bullet}$ of the form $\Omega(\mathcal{V}_{\bullet\rightrightarrows\bullet,\infty})$, where the homotopy cooperad  $\mathcal{V}_{\bullet\rightrightarrows\bullet,\infty}$ has the underlying chain complex 
\[
\mathcal{V}_{\bullet\rightrightarrows\bullet,\infty}:=\overline{\mathcal{P}^{\ac}}_{(x\to x)}\oplus \overline{\mathcal{P}^{\ac}}_{(y\to y)}\oplus s\mathcal{P}^{\ac}_{(x\to y)}\otimes C_\bullet([0,1]) ,
\] 
and the decomposition maps 
\begin{itemize}
	\item[-] induced by infinitesimal decomposition maps of the cooperad $\overline{\mathcal{P}^{\ac}}$ on both $\overline{\mathcal{P}^{\ac}}_{(x\to x)}$ and $\overline{\mathcal{P}^{\ac}}_{(y\to y)}$
	\item[-] induced by the homotopy cooperad structure of the $\mathcal{V}_{\bullet\to\bullet,\infty}$ on both $\overline{\mathcal{P}^{\ac}}_{(x\to y)}\otimes\mathbf{0}$ and $\overline{\mathcal{P}^{\ac}}_{(x\to y)}\otimes\mathbf{1}$
	\item[-] on elements $sM_{(x\to y)}\otimes\mathbf{01}\in s\mathcal{P}^{\ac}_{(x\to y)}\otimes\mathbf{01}$ corresponding to an element $M\in\mathcal{P}^\ac(n)$ have the shape
	\begin{equation}
	\Delta(sM_{(x\to y)}\otimes\mathbf{01})=\mathbf{01}\circ(M_{(x\to x)})-(M_{(y\to y)})\circ [[\mathbf{01}]]_n+\cdots, \label{HomotopyLT}
	\end{equation}
	where
	\[
	[[\mathbf{01}]]_n=\Sym\left(\mathbf{01}\otimes\mathbf{0}^{\otimes(n-1)}+\mathbf{1}\otimes\mathbf{01}\otimes \mathbf{0}^{\otimes(n-2)}+\cdots+\mathbf{1}^{\otimes(n-1)}\otimes\mathbf{01}\right) ,
	\]
	and the ``non-leading terms'' (denoted by `$\cdots$' in the formula above) belong to the ideal generated by the operations $s\overline{\mathcal{P}^{\ac}}_{(x\to y)}\otimes C_\bullet([0,1])$ of arity strictly less than $n$ (that justifies referring to them as non-leading terms).  
\end{itemize} 
\end{definition}	

Assuming that the homotopy hypothesis is satisfied, it is natural to define homotopy between two homotopy morphisms of two homotopy $\mathcal{P}$-algebras as a $\Omega(\mathcal{V}_{\bullet\rightrightarrows\bullet,\infty})$-algebra whose structure maps from $\overline{\mathcal{P}^{\ac}}_{(x\to x)}$ and $\overline{\mathcal{P}^{\ac}}_{(y\to y)}$ define the given homotopy $\mathcal{P}$-algebra structures, structure maps from $\overline{\mathcal{P}^{\ac}}_{(x\to y)}\otimes\mathbf{0}$ and $\overline{\mathcal{P}^{\ac}}_{(x\to y)}\otimes\mathbf{1}$ define the given morphisms. Essentially, the structure map $\overline{\mathcal{P}^{\ac}}_{(x\to y)}\otimes\mathbf{01}$ provides a homotopy between the morphisms. 

\smallskip

The claim of \cite[Th.~36]{Mar2} essentially implies that the homotopy hypothesis holds.  In \cite{Doubek}, it is noted that the proof of the above result given in \cite{Mar2} is incomplete, and a proof of a much more general statement under, however, somewhat more restrictive assumptions on the operad $\mathcal{P}$ is given.  In Section~\ref{concordance-and-Markl}, we shall explain how to view this result from a different angle and prove the homotopy hypothesis for any Koszul operad.

\begin{remark}
It is worth noticing that even if the homotopy hypothesis holds,  such a model of $\mathcal{P}_{\bullet\to\bullet}$ does not have to be unique (it is not minimal by the construction, and hence there is freedom in how to reconstruct the 
non-leading terms). In one example, the nonsymmetric operad of associative algebras, explicit formulae for images of the generators under the differential were computed in \cite{Mar2}, and it was observed that this recovers the definition of a natural transformation between two $A_\infty$-functors based on the notion of derivation homotopy, as in~\cite{Fuk,Gran99,L-H,Lyu}.	
\end{remark}	

\begin{remark} 
The formula for $[[h]]$ makes one think of derivation homotopies as well, but this intuition is only correct under very restrictive assumptions, e.g., for the case $\mathcal{P}=\Lie$ the derivation homotopy formulae only work for Abelian $L_\infty$ algebras, see \cite{SSS}. Nonetheless the derivation homotopy formulae do always work for nonsymmetric operads. The reason for that is that one can use the \v{C}ech cochain complex $C_\bullet([0,1])^\vee$ in the place of the de Rham complex $\Omega$, since  tensoring with an associative algebra does not change the type of algebras for algebras over nonsymmetric operads. The formulae
 \[
e_{\mathbf{0}}e_{\mathbf{01}}=e_{\mathbf{01}}=e_{\mathbf{01}}e_{\mathbf{1}}  
 \]
for computing products in the \v{C}ech cochain complex of the interval naturally lead to derivation homotopies.
\end{remark}

\section{Concordance and homotopy of Maurer--Cartan elements}\label{MC-and-Quillen}

\subsection{Concordance and Quillen homotopy}\label{concordance-and-Quillen}

\begin{theorem}\label{Th:Concordance=Quillen}
Two homotopy morphisms of homotopy $\mathcal{P}$-algebras $X$ and $Y$ are concordant if and only if the corresponding Maurer--Cartan elements of $\mathcal{L}(X,Y)$ are Quillen homotopic.
\end{theorem}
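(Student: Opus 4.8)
The plan is to read a concordance as a Maurer--Cartan element of the $L_\infty$-algebra attached by Proposition~\ref{def-of-L} to the pair $(X,Y\otimes\Omega)$, and then to recognise that algebra as an $\Omega$-linear extension of $\mathcal{L}(X,Y)$, so that Quillen homotopy appears directly. Since $\mathcal{P}\simeq\mathcal{P}\otimes\Com$, the space $Y\otimes\Omega$ carries a homotopy $\mathcal{P}$-algebra structure whose bar differential is assembled from the bar differential of $Y$, the de Rham differential of $\Omega$ and the commutative product of $\Omega$; applying Proposition~\ref{def-of-L} to $(X,Y\otimes\Omega)$ identifies the datum of a concordance, that is a morphism of dg $\mathcal{P}^{\ac}$-coalgebras $\mathsf{B}(X)\to\mathsf{B}(Y\otimes\Omega)$, with the set of Maurer--Cartan elements of $\mathcal{L}(X,Y\otimes\Omega)=\Hom_\k(s\mathcal{P}^{\ac}(X),Y\otimes\Omega)$. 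The second step is to identify $\mathcal{L}(X,Y\otimes\Omega)$ with the tensor product $\mathcal{L}(X,Y)\otimes\Omega$, completed with respect to the arity filtration and equipped with the $\Omega$-linear extension of the structure maps \eqref{Lxy-diff} and \eqref{Lxy-brackets}; this is a direct computation from the explicit shape of the bar differential of $Y\otimes\Omega$ recalled above, completeness of $\mathcal{L}(X,Y)$ for the arity filtration ensuring both that the completed tensor product is the correct ambient object and that all the series below converge. Finally, the two evaluations $\phi_s\colon\Omega\to\k$, $s\in\{0,1\}$, being morphisms of dg commutative algebras, induce morphisms of homotopy $\mathcal{P}$-algebras $Y\otimes\Omega\to Y$, hence $L_\infty$-morphisms $\mathcal{L}(X,Y)\mathbin{\widehat\otimes}\Omega\to\mathcal{L}(X,Y)$; under the identification above these are exactly the assignments $\phi(v)\mapsto\phi(v)|_{t=0}$ and $\phi(v)\mapsto\phi(v)|_{t=1}$ of Definition~\ref{Concordance}, and they send a concordance to the Maurer--Cartan elements of $\mathcal{L}(X,Y)$ corresponding to $p$ and to $q$.

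For the ``if'' direction this is enough: a Quillen homotopy is a Maurer--Cartan element $\beta$ of $\mathcal{L}(X,Y)\otimes\Omega$, which lies inside the completion $\mathcal{L}(X,Y)\mathbin{\widehat\otimes}\Omega\cong\mathcal{L}(X,Y\otimes\Omega)$ and is therefore a concordance, and by the dictionary its evaluations at $t=0$ and $t=1$ are the morphisms $p$ and $q$.

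The ``only if'' direction is the heart of the matter. Given a concordance, that is a Maurer--Cartan element $\beta\in\mathcal{L}(X,Y)\mathbin{\widehat\otimes}\Omega$, I would write $\beta=\beta_0(t)+\beta_1(t)\,dt$ with $\beta_0$ of degree $-1$ and $\beta_1$ of degree $0$. Separating the Maurer--Cartan equation into its $\k[t]$- and $\k[t]\,dt$-parts shows that $\beta_0(t)$ is a one-parameter family of Maurer--Cartan elements of $\mathcal{L}(X,Y)$ satisfying the gauge flow equation $\dot\beta_0(t)=-\ell^{\beta_0(t)}_1(\beta_1(t))$, with $\beta_0(0)$ and $\beta_0(1)$ the elements attached to $p$ and $q$; this is the algebraic counterpart of the ``integration over $I$'' computation recalled right before Definition~\ref{Concordance}. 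Hence $p$ and $q$ correspond to gauge homotopic Maurer--Cartan elements of $\mathcal{L}(X,Y)$ --- a priori through a time-dependent gauge parameter, which the standard reparametrisation argument allows one to replace by a time-independent one --- and the equivalence between gauge homotopy and Quillen homotopy established in Section~\ref{Quillen-and-gauge} completes the proof.

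The main obstacle I anticipate is the gap between $\mathcal{L}(X,Y)\otimes\Omega$, in which a Quillen homotopy must live, and its arity-completion $\mathcal{L}(X,Y)\mathbin{\widehat\otimes}\Omega\cong\mathcal{L}(X,Y\otimes\Omega)$, in which a general concordance lives: a concordance is polynomial in $t$ only arity by arity, with no uniform bound on the $t$-degree. Routing the ``only if'' direction through gauge homotopy is precisely what bypasses this, because the equivalence relation generated by gauge transformations --- time-dependent ones included --- refers only to the complete $L_\infty$-algebra $\mathcal{L}(X,Y)$ itself. One nevertheless has to check with some care that splitting the Maurer--Cartan equation and reconstructing the flow are mutually inverse, that all the series occurring are convergent in the arity filtration, and that the resulting notion agrees with the one used in Section~\ref{Quillen-and-gauge} --- whose gauge$\,\Leftrightarrow\,$Quillen equivalence must, of course, be available at this stage.
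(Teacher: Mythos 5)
Your core identification is the same as the paper's: a concordance is read off from its corestriction as an element of $\Hom_\k(\mathsf{B}(X),Y\otimes\Omega)$, and the condition of being a dg $\mathcal{P}^\ac$-coalgebra morphism is matched against the Maurer--Cartan equation in the $\Omega$-extension of $\mathcal{L}(X,Y)$. The paper, however, proves \emph{both} directions in one stroke: it writes out the two defining equations explicitly --- \eqref{dgc-condition} for the coalgebra-morphism condition and \eqref{MC-condition} for the Maurer--Cartan condition --- on the common ambient space and observes that they literally coincide, after identifying $\Hom_\k(\mathsf{B}(X),Y)\otimes\Omega$ with $\Hom_\k(\mathsf{B}(X),Y\otimes\Omega)$ outright (consistently with its blanket convention that all $L_\infty$-algebras in play are complete and all series converge). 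Your ``only if'' detour through gauge homotopy is therefore a genuinely different route, driven by the completed-versus-plain tensor product distinction that the paper simply does not draw; it is a legitimate thing to worry about, but under the paper's conventions the detour buys nothing, and it costs something real: the step you describe as ``the standard reparametrisation argument'' for trading a time-dependent gauge parameter for a time-independent one is precisely the non-trivial point the paper isolates in Section~\ref{Quillen-and-gauge} --- it is the gap it attributes to \cite{Dol07}, and its resolution there is not a reparametrisation but Getzler's theorem that $\gamma_\bullet(L)\hookrightarrow\MC_\bullet(L)$ is a homotopy equivalence (extended to complete algebras via \cite{Ber11}). Note also that a naive time reparametrisation cannot make a gauge parameter constant in a nonabelian situation. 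So either drop the detour and argue, as the paper does, that the two conditions cut out the same subset (so both implications are immediate), or, if you keep it, replace the appeal to a ``standard'' argument by an explicit appeal to the $\gamma_\bullet\hookrightarrow\MC_\bullet$ equivalence, and make sure the gauge--Quillen comparison of Section~\ref{Quillen-and-gauge} is established independently of the present theorem, as you correctly flag at the end.
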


\begin{proof}
By definition, $p$ and $q$ are concordant if there exists a morphism of dg $\mathcal{P}^\ac$-coalgebras
 \[
\mathsf{B}(X)\to \mathsf{B}(Y\otimes\Omega)
 \]
that evaluates to $p$ and $q$ at $t=0$ and $t=1$ respectively. The set
 \[
\Hom_{dg \mathcal{P}^\ac\text{-}coalg}(\mathsf{B}(X),\mathsf{B}(Y\otimes\Omega)) 
 \]
is a subset of the space of degree~$0$ linear maps
 \[
\Hom_{\k}(\mathsf{B}(X),Y\otimes\Omega) 
 \]
since a $\mathcal{P}^\ac$-coalgebra morphism from a conilpotent coalgebra to a cofree conilpotent coalgebra is completely determined by its corestrictions on cogenerators, and the compatibility with differentials imposes some conditions that cut out a subset of the space of linear maps.

On the other hand, two Maurer--Cartan elements $\alpha_0,\alpha_1$ of $\MC(\mathcal{L}(X,Y))$ are Quillen homotopic if there exists $\beta\in\MC(\mathcal{L}(X,Y)\otimes\Omega)$ that evaluates to $\alpha_0$ and $\alpha_1$ at $t=0$ and $t=1$ respectively. The set $\MC(\mathcal{L}(X,Y)\otimes\Omega)$ is a subset of the space of degree~$-1$ elements in 
 \[
\mathcal{L}(X,Y)\otimes\Omega\simeq\Hom_\k(s\mathcal{P}^{\ac}(X),Y)\otimes\Omega,
 \]
and the latter can be identified with the space of degree~$0$ elements in
 \[
\Hom_\k(\mathcal{P}^{\ac}(X),Y)\otimes\Omega \simeq \Hom_\k(\mathsf{B}(X),Y)\otimes\Omega\simeq \Hom_\k(\mathsf{B}(X),Y\otimes\Omega).
 \]

We see that the sets we want to identify are embedded into the same vector space. Let us check that the actual equations that define these spaces, that is compatibility with differentials and the Maurer--Cartan equation, actually match. 

Let us take an arbitrary degree~$0$ map $\phi\in\Hom_{\k}(\mathsf{B}(X),Y\otimes\Omega)$. It gives rise to the unique $\mathcal{P}^\ac$-coalgebra morphism $\hat{\phi}$ by the formula
 \[
\hat{\phi}(b)=\sum_{k\ge 1}(\id\otimes\phi^{\otimes k})\circ\Delta_X^{(k-1)}(b).  
 \]
Here $\Delta_X^{(k-1)}$ is the $\mathcal{P}^\ac$-coalgebra decomposition map $\mathsf{B}(X)\to\mathcal{P}^\ac(k)\otimes_{\mathbb{S}_k}(\mathsf{B}(X))^{\otimes k}$; the remaining notation in the following formulas has been already introduced in Section~\ref{sec:operadic-recollection}.
In order for this morphism to be a dg coalgebra morphism, we must have 
\begin{equation}\label{dg-coalgebra-map}
\hat{\phi}\circ D_X=D_{Y\otimes\Omega}\circ \hat{\phi}  .
\end{equation} 
Note that since both $\hat{\phi}\circ D_X$ and $D_{Y\otimes\Omega}\circ \hat{\phi}$ are coderivations of $\mathsf{B}(X)$ with values in $\mathsf{B}(Y\otimes\Omega)$, it is sufficient to check the condition~\eqref{dg-coalgebra-map} on cogenerators, that is only look at its projection on $Y\otimes\Omega$. This way we obtain the condition
\begin{equation}\label{dgc-condition}
(\id\otimes d_{DR})\circ\phi+(d^{(1)}_Y\otimes\id)\circ\phi+\sum_{k\ge 2}(d^{(k)}_{Y\otimes\Omega})\circ (\id\otimes\phi^{\otimes k})\circ\Delta_X^{(k-1)}
=\phi\circ D_X.
\end{equation}

Here $d_{DR}$ is the de Rham differential on $\Omega$. Note that  $d^{(k)}_{Y\otimes\Omega}$ can be explicitly computed as
 \[
d^{(k)}_{Y\otimes\Omega}=\left(d^{(k)}_{Y}\otimes\mu^{(k)}\right)\circ \tau ,   
 \]
where $\tau\colon (Y\otimes\Omega)^{\otimes k}\simeq Y^{\otimes k}\otimes\Omega^{\otimes k}$ is the isomorphism obtained via the structure of symmetric monoidal category of graded vector spaces, and $\mu^{(k)}\colon \Omega^{\otimes k}\to \Omega$ is the $k$-fold product on $\Omega$. 
Let us spell out the Maurer--Cartan condition for a degree $-1$ element 
 \[
\psi\in\Hom_{\k}(s\mathsf{B}(X),Y)\otimes\Omega
 \]
Using explicit formulae \eqref{Lxy-diff}, \eqref{Lxy-brackets} for the $L_\infty$-algebra structure on $\mathcal{L}(X,Y)$ to compute elements in the $L_\infty$-algebra $\mathcal{L}(X,Y)\otimes\Omega$, we rewrite the Maurer--Cartan condition
 \[
\sum_{k\ge 1}\frac{1}{k!}\ell_k(\psi,\ldots,\psi)=0
 \]
evaluated on an element $sb\in s\mathsf{B}(X)$ as follows:
\begin{multline}\label{MC-condition}
(d^{(1)}_Y\otimes\id)\circ(\psi s)(x)-(\psi s)\circ D_X(b)+(1\otimes d_{DR})(\psi s)(x)+\\+\sum_{k\ge2} (d^{(k)}_Y\otimes\mu^{(k)})\circ \tau \circ(\id\otimes(\psi s)^{\otimes k})\circ \Delta_X^{(k-1)}(b)=0.
\end{multline}
It remains to recall that the degree $1$ isomorphism that allows us to identify the graded vector space $\Hom_{\k}(s\mathsf{B}(X),Y)\otimes\Omega$ with $\Hom_{\k}(\mathsf{B}(X),Y\otimes\Omega)$ is essentially given by $\psi\mapsto \phi:=\psi s$, to conclude that the two conditions \eqref{dgc-condition} and \eqref{MC-condition} are the same.\qed
\end{proof}

\subsection{Quillen homotopy and gauge homotopy} \label{Quillen-and-gauge}

The following result is not new; it is nothing but a careful unwrapping of various statements proved in \cite{Get}.
\begin{proposition}
Two elements $\alpha_0,\alpha_1\in\MC(L)$ are Quillen homotopic if and only if they are gauge homotopic.
\end{proposition}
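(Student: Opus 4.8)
The plan is to establish the two implications separately, using the de Rham algebra $\Omega = \Omega^\bullet(I)$ together with its two augmentations $\phi_0,\phi_1$ and the contracting homotopy that the interval carries. First I would recall the precise setup: a Quillen homotopy between $\alpha_0,\alpha_1\in\MC(L)$ is an element $\beta\in\MC(L\otimes\Omega)$ with $\phi_s(\beta)=\alpha_s$, and a gauge homotopy is an integral curve $\alpha(t)$ of the vector field $V_x$ for some $x\in L_0$ with prescribed endpoints, equivalently a solution in $\MC(L[[t]])$ of $\frac{d\alpha}{dt}+\ell_1^{\alpha(t)}(x)=0$. The key observation tying these together is that an element of $L\otimes\Omega$ can be written uniquely as $\alpha(t)+x(t)\,dt$ with $\alpha(t)\in L[[t]]$ of degree $-1$ and $x(t)\in L[[t]]$ of degree $0$, and that the Maurer--Cartan equation in $L\otimes\Omega$, after collecting the $dt$-free part and the $dt$-part, becomes exactly the pair of equations: $\alpha(t)\in\MC(L[[t]])$ for each fixed $t$, together with the flow equation $\frac{d\alpha}{dt}=-\ell_1^{\alpha(t)}(x(t))$. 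This is the $L_\infty$ analogue of the classical computation for dg Lie algebras; the component-counting is bookkeeping, and the signs come from the Koszul rule applied to moving $dt$ past the $\ell_k$'s.

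For the implication \emph{gauge $\Rightarrow$ Quillen}, I would start from a gauge homotopy, i.e.\ a path $\alpha(t)$ solving the flow equation with constant gauge parameter $x\in L_0$, and simply set $\beta := \alpha(t) + x\,dt \in L\otimes\Omega$. By the observation above, $\beta$ is a Maurer--Cartan element of $L\otimes\Omega$, and $\phi_0(\beta)=\alpha(0)=\alpha_0$, $\phi_1(\beta)=\alpha(1)=\alpha_1$. One technical point worth a sentence: one must check that $\alpha(t)$, a priori a power series in $t$, actually lies in $L\otimes\Omega^0(I)$ (polynomial, or at least that the construction only uses finitely many terms modulo each $F^rL$); this is where completeness of $L$ is used, since the flow equation is solved term-by-term in the filtration and each $L/F^rL$ is an honest finite-stage problem whose solution is polynomial in $t$ when $x$ is constant. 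For the converse, \emph{Quillen $\Rightarrow$ gauge}, the subtlety is that a general $\beta\in\MC(L\otimes\Omega)$ has the form $\alpha(t)+x(t)\,dt$ with $x(t)$ \emph{not} constant, so $\alpha(t)$ need not be an integral curve of a single $V_x$. The standard fix, which is exactly the ``careful unwrapping of \cite{Get}'' referred to, is a gauge-fixing argument: using the contracting homotopy $h$ on $\Omega$ (integration from $0$, as in the concordance discussion, $h(c)=(-1)^{|c|-1}\int_0^t c$), one conjugates $\beta$ by a suitable automorphism of $L\otimes\Omega$ — concretely, one solves an auxiliary ODE to produce a degree-$0$ element that gauges away the $t$-dependence of $x(t)$, replacing $\beta$ by a Quillen-homotopic $\beta'=\alpha'(t)+x\,dt$ with $x=x(0)$ (or with $x$ read off via Dupont's homotopy $s_\bullet$, landing in Getzler's $\gamma_\bullet(L)$); then $\alpha'(t)$ is an integral curve of $V_x$, and since endpoints are unchanged by a gauge at $t\in\{0,1\}$ that is trivial there, $\alpha_0$ and $\alpha_1$ are gauge homotopic.

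I expect the main obstacle to be precisely this gauge-fixing step in the Quillen $\Rightarrow$ gauge direction: solving the auxiliary differential equation in the complete $L_\infty$-algebra and verifying that conjugation by the resulting element preserves the Maurer--Cartan property and the endpoints, all while keeping track of the higher brackets $\ell_k$. In the dg Lie case this is Manetti's argument (cited as \cite{Man1}), and the paper explicitly notes that Dolgushev's \cite{Dol07} proof ``formally proves a weaker statement,'' so the point here is to supply the honest $L_\infty$ version. I would organize it as: (i) reduce to the profinite case by passing to quotients $L/F^rL$ and taking the limit, since everything is compatible with the filtration; (ii) in the profinite case invoke or reprove the existence of the gauge transformation trivializing $x(t)$, using that $\exp$ of a nilpotent degree-$0$ element acts on $\MC$; (iii) check endpoint compatibility. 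The rest — the component computation showing $\MC(L\otimes\Omega)$ decomposes into ``$\MC$ for each $t$'' plus ``flow equation'' — is routine linear algebra with signs, and I would present it compactly rather than in full.
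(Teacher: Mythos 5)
Your preliminary computation (decomposing $\beta=\beta_{-1}+\beta_0\,dt$, and splitting the Maurer--Cartan equation in $L\otimes\Omega$ into ``$\beta_{-1}(t)\in\MC(L[[t]])$'' plus the flow equation $\beta_{-1}'(t)+\ell_1^{\beta_{-1}}(\beta_0)=0$) is exactly the computation the paper performs, and the gauge~$\Rightarrow$~Quillen direction is fine modulo the polynomiality/completeness caveat you flag. The genuine gap is in your Quillen~$\Rightarrow$~gauge step. You propose to ``conjugate $\beta$ by a suitable automorphism of $L\otimes\Omega$,'' solving an auxiliary ODE to make the $dt$-component $x(t)$ constant while fixing the endpoints, ``using that $\exp$ of a nilpotent degree-$0$ element acts on $\MC$.'' That mechanism is the dg Lie picture (Manetti's argument, \cite{Man1}) and it does not carry over naively to $L_\infty$-algebras: there is no group of gauge transformations acting on $\MC(L)$, the flows of the vector fields $V_x$ for different $x$ do not compose into the flow of a single constant $x$, and producing a constant gauge parameter with prescribed behaviour at $t=0,1$ is precisely the nontrivial point. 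Indeed, the paper stresses that Dolgushev's argument \cite{Dol07} founders exactly here, yielding only a $t$-dependent $x(t)$ --- which is what your decomposition already gives for free --- so your step (ii) assumes essentially what has to be proved.

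The paper closes this gap differently: it observes that Quillen homotopy is path-connectedness in Getzler's simplicial set $\MC_\bullet(L)$, while gauge homotopy is path-connectedness in $\gamma_\bullet(L)$, because Dupont's homotopy $s_1$ singles out the $1$-simplices with \emph{constant} $dt$-part; it then invokes Getzler's theorem \cite{Get} that the inclusion $\gamma_\bullet(L)\hookrightarrow\MC_\bullet(L)$ is a homotopy equivalence (in the form generalised to complete $L_\infty$-algebras \cite{Ber11}), so that $\pi_0(\gamma_\bullet(L))=\pi_0(\MC_\bullet(L))$, which is the statement. Note that even identifying ``homotopic'' with ``same path component'' uses the Kan property of these simplicial sets, again supplied by \cite{Get}. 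So your outline can be repaired, but only by citing (or re-proving, rel boundary and compatibly with the filtration) this gauge-fixing theorem of Getzler--Berglund; as written, the auxiliary-ODE conjugation is an assertion of that theorem rather than a proof of it.
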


\begin{proof}
Let us begin with an elementary computation. Suppose that $\alpha_0$ and $\alpha_1$ are gauge homotopic. This means that there exists $x\in L_0$ for which an integral curve $\alpha(t)$ of the vector field $V_x$ connects $\alpha_0$ to $\alpha_1$ in the Maurer--Cartan scheme. It satisfies the differential equation $\alpha'(t)+\ell^\alpha_1(x)=0$. 

On the other hand, suppose that $\alpha_0$ and $\alpha_1$ are Quillen homotopic. This means that there exists an element $\beta\in\MC(L\otimes\Omega)$ which evaluates at $\alpha_0$ and $\alpha_1$ at $t=0$ and $t=1$ respectively. Let us write $\beta=\beta_{-1}+\beta_0\,dt$, where $\beta_i\in L[t]$, and the homological degree of $\beta_i$ is $i$. Since $(dt)^2=0$, the Maurer--Cartan equation for $\beta$ becomes
 \[
\beta_{-1}'(t)\,dt +\sum_k\frac1{k!}\ell_k(\underbrace{\beta_{-1},\ldots,\beta_{-1}}_{k \text{ times}})+\sum_k\frac{k}{k!}\ell_k(\underbrace{\beta_{-1},\ldots,\beta_{-1}}_{k \text{ times}},\beta_0)\,dt=0,  
 \]
so $\beta_{-1}\in\MC(L[t])$, and $\beta_{-1}'(t)+\ell_1^{\beta_{-1}}(\beta_0)=0$. 

It follows that for each Quillen homotopy $\beta=\beta_{-1}+\beta_0\,dt$, the pair $(\beta_{-1},\beta_0)$ is almost exactly the datum required for gauge homotopy, with the only difference that $\beta_0$ is an element of $L_0[t]$ rather than $L_0$, as the gauge homotopy would require. (In~\cite{Dol07}, this circumstance is ignored, and a $t$-dependent element~$x$ is obtained, which thus does not literally provide a gauge homotopy). 

Conceptually, the computations we perform merely mean that the Quillen homotopy is precisely the homotopy in the simplicial set  $\MC_\bullet(L)$, and the gauge homotopy is precisely the homotopy in the simplicial set $\gamma_\bullet(L)$, since the Dupont\rq{}s chain homotopy $s_1$ in this case singles out constant $1$-forms. 

Now, recall from~\cite{Get} that the inclusion of simplicial sets $\gamma_\bullet(L)\hookrightarrow\MC_\bullet(L)$ is a homotopy equivalence, in particular $\pi_0(\gamma_\bullet(L))=\pi_0(\MC(L))$. (To be precise, that result applies to nilpotent $L_\infty$-algebras only, and for our purposes an appropriate generalisation of that result for complete $L_\infty$-algebras is required \cite{Ber11}).  The former is precisely given by Maurer--Cartan elements of $L$ modulo the gauge homotopy relation, the latter is given by Maurer--Cartain elements modulo the Quillen homotopy relation. \qed
\end{proof}

\begin{remark}
Our proof, in particular, means that the notions of gauge equivalence and Quillen equivalence coincide in the context of formal deformation theory, where they give rise to equivalence relations on the Maurer--Cartan sets produced by the deformation functor $\MC_L\colon A\mapsto \MC(L\otimes A)$ of Artinian local algebras. For a nice introduction to formal deformation theory, we refer the reader to \cite{Can1,Man1}. 
\end{remark}

\section{Concordance and operadic homotopy} \label{concordance-and-Markl}

\subsection{The concordance operad}\label{concordance-operad}

Let $X$ and $Y$ be two homotopy $\mathcal{P}$-algebras. A $\mathcal{P}^\ac$-coalgebra morphism 
 \[
\phi\colon\mathsf{B}(X)\to \mathsf{B}(Y\otimes\Omega)
 \]
(required to establish the concordance of the morphisms $\phi_{t=0}$ and $\phi_{t=1}$) is completely determined by its corestrictions 
 \[
\mathcal{P}^\ac(X)\to Y\otimes\Omega.
 \]
These corestrictions must in addition determine a dg coalgebra morphism, that is be compatible with the differentials of bar complexes. Let us describe this data operadically. 
Maps 
 \[
\mathcal{P}^\ac(X)\to Y\otimes\Omega
 \]
can be identified with
 \[
\Hom_{\mathbb{S}}(\mathcal{P}^\ac_{(x\to y)}\otimes\Omega^\vee, \End_{X_x\oplus Y_y}),  
 \]
where $\Omega^\vee$ is the \emph{graded} dual coalgebra of $\Omega$, that is a vector space with the basis   
 \[
\alpha_i=(t^i)^\vee, \quad \beta_i=(t^i\,dt)^\vee\quad  (i\ge0),   
 \]
and the coalgebra structure
\begin{gather*}
\delta(\alpha_i)=\sum_{a+b=i}\alpha_a\otimes\alpha_b,\\
\delta(\beta_i)=\sum_{a+b=i}(\beta_a\otimes\alpha_b+\alpha_a\otimes\beta_b).
\end{gather*}
This suggests that the datum of two homotopy $\mathcal{P}$-algebras, and a concordance homotopy between two homotopy morphisms of those algebras may be equivalent to an algebra over a quasi-free coloured operad with generators
 \[
\mathcal{W}=s^{-1}\overline{\mathcal{P}^\ac}_{(x\to x)}\oplus s^{-1}\overline{\mathcal{P}^\ac}_{(y\to y)}\oplus \mathcal{P}^\ac_{(x\to y)}\otimes\Omega^\vee.
 \]
Let us indeed describe such an operad. Equivalently, we shall define a homotopy cooperad on
\[
\mathcal{V}_{\bullet\to\bullet,\Omega}:=s\mathcal{W}=\overline{\mathcal{P}^\ac}_{(x\to x)}\oplus \overline{\mathcal{P}^\ac}_{(y\to y)}\oplus s\mathcal{P}^\ac_{(x\to y)}\otimes\Omega^\vee.
\]

The homotopy cooperad decomposition map on each of the components $\overline{\mathcal{P}^\ac}_{(x\to x)}$ and $\overline{\mathcal{P}^\ac}_{(y\to y)}$ is given by the (honest) cooperad structure on $\mathcal{P}^\ac$, the Koszul dual cooperad of $\mathcal{P}$, each of these components is a sub-cooperad of $s\mathcal{W}$. The homotopy cooperad decomposition of
$s\mathcal{P}^\ac_{(x\to y)}\otimes\Omega^\vee$ does not vanish for trees of two types. The first type is decomposition maps indexed by trees $t$ with two internal vertices; in this case $\Delta_t(s(p\otimes \omega^\vee))$ is obtained from $\Delta_t(p)$ computed in $\mathcal{P}^\ac$ by mapping the root level component isomorphically to $s\mathcal{P}^\ac_{(x\to y)}$ and tensoring the result with $\omega^\vee$, and mapping the other component isomorphically to $\mathcal{P}^\ac_{(x\to x)}$. The second type is decomposition maps indexed by all two-level trees $t$; in this case, assuming that the root of $t$ has $k$ children, $\Delta_t(s(p\otimes \omega^\vee))$ is obtained by applying the full cooperad map 
 \[
\Delta^{(k-1)}\colon\mathcal{P}^\ac\to\mathcal{P}^\ac(k)\otimes_{\mathbb{S}_k}(\mathcal{P}^\ac)^{\otimes k}  
 \]
to $p$, mapping the root level component isomorphically to $\mathcal{P}^\ac_{(y\to y)}$, mapping the other components isomorphically to $s\mathcal{P}^\ac_{(x\to y)}$, and decorating those other components by tensor factors of $\delta^{(k-1)}(\omega^\vee)\in(\Omega^\vee)^{\otimes k}$. These decorations, depending on choices made for writing down representatives of trees, may appear with signs determined from the Koszul sign rule; in addition to that, there is a sign $-1$ appearing globally for all the decompositions of the second type. Finally, the differential of $\mathcal{V}_{\bullet\to\bullet,\Omega}$ is non-zero only on $s\mathcal{P}^\ac_{(x\to y)}\otimes\Omega^\vee$, where it is the dual of the de Rham differential $d_{DR}$ on $\Omega$. As in the case of homotopy morphisms, it is easy to check that 
\begin{itemize}
\item[-] this rule defines a homotopy cooperad, that is, once these maps are used to define a derivation of the free operad $\mathcal{F}(\mathcal{W})$, the resulting derivation squares to zero,
\item[-] the structure of an algebra over the cobar complex $\Omega(\mathcal{V}_{\bullet\to\bullet,\Omega})$ is equivalent to the datum of a pair of two homotopy $\mathcal{P}$-algebras $X$ and $Y$ and a homotopy morphism between $X$ and $Y\otimes\Omega$.  
\end{itemize}
We call the cobar complex $\Omega(\mathcal{V}_{\bullet\to\bullet,\Omega})$ the concordance operad, and denote it by $\mathcal{P}_{\bullet\to\bullet,\Omega}$.

\begin{theorem}\label{Th:concordance-operad}
The operad $\mathcal{P}_{\bullet\to\bullet,\Omega}$ is a resolution of the operad $\mathcal{P}_{\bullet\to\bullet}$.
\end{theorem}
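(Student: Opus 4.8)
The plan is to show that $\mathcal{P}_{\bullet\to\bullet,\Omega}$ is a quasi-free resolution of $\mathcal{P}_{\bullet\to\bullet}$ by exhibiting a surjective quasi-isomorphism $\mathcal{P}_{\bullet\to\bullet,\Omega}\twoheadrightarrow\mathcal{P}_{\bullet\to\bullet}$. First I would recall that the minimal model $\mathcal{P}_{\bullet\to\bullet,\infty}$ is already known to be a resolution of $\mathcal{P}_{\bullet\to\bullet}$ (this is part of the discussion preceding Proposition~\ref{def-of-L}, citing \cite{MV,Mar2}). So it suffices to produce a quasi-isomorphism between $\mathcal{P}_{\bullet\to\bullet,\Omega}$ and $\mathcal{P}_{\bullet\to\bullet,\infty}$, or more directly to compare both against $\mathcal{P}_{\bullet\to\bullet}$. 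The generators of $\mathcal{P}_{\bullet\to\bullet,\Omega}$ are $\mathcal{W}=s^{-1}\overline{\mathcal{P}^\ac}_x\oplus s^{-1}\overline{\mathcal{P}^\ac}_y\oplus\mathcal{P}^\ac_f\otimes\Omega^\vee$, whereas those of the minimal model are $s^{-1}\overline{\mathcal{P}^\ac}_x\oplus s^{-1}\overline{\mathcal{P}^\ac}_y\oplus\mathcal{P}^\ac_f$; the only difference is the extra tensor factor $\Omega^\vee$ on the $f$-colored part. The key observation is that $\Omega=\Omega^\bullet([0,1])$ is acyclic as a complex augmented over $\k$ (via either $t=0$ or $t=1$), so its graded dual $\Omega^\vee$ with the dual de Rham differential is likewise acyclic, with homology $\k$ concentrated in the copy dual to constants.

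The heart of the argument is then a filtration/spectral-sequence computation. I would filter $\mathcal{F}(\mathcal{W})$ by the number of $\mathcal{P}^\ac_f$-type generators appearing in a tree monomial (equivalently, filter by the "$f$-weight"). On the associated graded, the part of the differential that survives is exactly the internal differential coming from the de Rham differential on the $\Omega^\vee$-factors (the part described in the last sentence of Section~\ref{concordance-operad}: "the differential of $s\mathcal{W}$ is non-zero only on $s\mathcal{P}^\ac_f\otimes\Omega^\vee$, where it is the dual of $d_{DR}$"), together with the pieces of the homotopy-cooperad differential that do not change $f$-weight. Since $\Omega^\vee$ is acyclic with one-dimensional homology (the class of the element dual to the constant function $1$), taking homology of this piece collapses each factor $\mathcal{P}^\ac_f\otimes\Omega^\vee$ down to $\mathcal{P}^\ac_f\otimes H(\Omega^\vee)\cong\mathcal{P}^\ac_f$. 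One must then check that the residual differential induced on this homology agrees, up to the filtration-preserving part, with the differential of the minimal model $\mathcal{P}_{\bullet\to\bullet,\infty}$; this is where the explicit description of the $\Delta_t$-maps (two types of nonzero decompositions on $s\mathcal{P}^\ac_f\otimes\Omega^\vee$, using $\delta^{(k-1)}(\omega^\vee)$) is used, since evaluating those on the surviving class of $1\in\Omega$ and its dual reproduces exactly the cooperad coproducts that define the bar/minimal-model differential. After identifying the $E_1$-page with (a model quasi-isomorphic to) $\mathcal{P}_{\bullet\to\bullet,\infty}$, the convergence of the spectral sequence — guaranteed because the filtration is bounded below and exhaustive in each arity, the relevant $\mathbb{S}$-modules being arity-wise finite-dimensional in each homological degree, and $\mathcal{P}^\ac(1)$ being one-dimensional so $\mathcal{P}^\ac$ is connected — yields the desired quasi-isomorphism $\mathcal{P}_{\bullet\to\bullet,\Omega}\simeq\mathcal{P}_{\bullet\to\bullet}$.

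Alternatively, and perhaps more cleanly, I would phrase this via homotopy transfer: the contraction data of $\Omega$ onto its cohomology $\k$ (Dupont-style or the elementary contraction of $\Omega^\bullet([0,1])$ onto constants) dualizes to a contraction of $\Omega^\vee$ onto $H(\Omega^\vee)$, and hence to a homotopy retract of the coloured homotopy cooperad $\overline{\mathcal{P}^\ac}_x\oplus\overline{\mathcal{P}^\ac}_y\oplus s\mathcal{P}^\ac_f\otimes\Omega^\vee$ onto $\overline{\mathcal{P}^\ac}_x\oplus\overline{\mathcal{P}^\ac}_y\oplus s\mathcal{P}^\ac_f$. Applying Proposition~\ref{thm:HTTCoop} (the homotopy transfer theorem for homotopy cooperads of Drummond-Cole and Vallette) to this retract produces a transferred homotopy cooperad structure on the smaller module together with an $\infty$-quasi-isomorphism between the two; dualizing back to quasi-free operads gives a quasi-isomorphism $\mathcal{P}_{\bullet\to\bullet,\Omega}\to\mathcal{F}(\text{transferred})$, and one then checks that the transferred structure coincides with (or is itself quasi-isomorphic to) the minimal model $\mathcal{P}_{\bullet\to\bullet,\infty}$, which is the promised resolution of $\mathcal{P}_{\bullet\to\bullet}$.

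The main obstacle I expect is the last verification in either approach: confirming that the transferred (respectively, $E_1$-page) homotopy cooperad structure on $\overline{\mathcal{P}^\ac}_x\oplus\overline{\mathcal{P}^\ac}_y\oplus s\mathcal{P}^\ac_f$ really does land in the minimal model — i.e., that the leading terms match Markl's formula $\partial(M_f)=f\circ(s^{-1}M_x)-(s^{-1}M_y)\circ(f^{\otimes n})+\cdots$ — and, more delicately, that the transfer formula~\eqref{HTTforCoops} does not generate spurious extra terms that obstruct the comparison. This is precisely the point at which the subtle "infinite series" issue flagged in the introduction becomes relevant: the transfer sums are a priori infinite, and one must argue (using that $\mathcal{C}(1)=0$ here, so that the convolution structures are complete and each arity component receives only finitely many contributions) that the partial sums stabilize, so that the transferred structure is a genuine, well-defined homotopy cooperad. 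Once convergence is secured, identifying it with the minimal model is a matter of an inductive argument on arity plus homological degree, using minimality of $\mathcal{P}_{\bullet\to\bullet,\infty}$ and the fact that any two quasi-free resolutions with the correct generating $\mathbb{S}$-module and leading differential are connected by a filtered quasi-isomorphism.
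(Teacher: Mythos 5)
Your overall strategy is the same as the paper's: exploit the fact that $H(\Omega^\vee)$ is one-dimensional, spanned by the class of $\alpha_0=1^\vee$, and compare $\mathcal{P}_{\bullet\to\bullet,\Omega}$ with the minimal model $\mathcal{P}_{\bullet\to\bullet,\infty}$, whose generators differ only by the tensor factor $\Omega^\vee$. But the execution of your main (spectral-sequence) variant has a concrete problem: filtering by the number of $f$-coloured generators does not isolate the de Rham part of the differential on the associated graded. The $f$-weight is preserved not only by $d_{\Omega^\vee}$ but also by the decompositions inside $\overline{\mathcal{P}^\ac}_x$ and $\overline{\mathcal{P}^\ac}_y$, by the decompositions of type one (splitting an $x$-generator off below an $f$-generator), and by the two-level decompositions with a single child; so the $E_1$-page of your filtration is not obtained by ``taking homology of the de Rham piece'', and the asserted collapse of $\mathcal{P}^\ac_f\otimes\Omega^\vee$ to $\mathcal{P}^\ac_f$ at $E_1$ is unjustified as stated. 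Moreover, even granting an identification of $E_1$ with the minimal model, convergence of a spectral sequence computing $H(\mathcal{P}_{\bullet\to\bullet,\Omega})$ does not by itself produce a quasi-isomorphism onto $\mathcal{P}_{\bullet\to\bullet}$: without an actual comparison morphism you still face higher differentials and extension issues. The paper avoids both difficulties at once: the inclusion $\k\alpha_0\hookrightarrow\Omega^\vee$ is a quasi-isomorphism of dg coalgebras, it induces a morphism of homotopy cooperads (here one uses that $\alpha_0$ is grouplike and $d\alpha_0=0$, so the $\alpha_0$-part is closed under all $\Delta_t$), hence an honest morphism of cobar complexes whose source is literally $\mathcal{P}_{\bullet\to\bullet,\infty}$; filtering both sides by the total number of internal vertices (not by $f$-weight) makes $d_0$ purely internal, the map an isomorphism on $E_1$, and the mapping theorem for spectral sequences finishes the argument.

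Your alternative route via the homotopy transfer theorem of Drummond-Cole and Vallette is legitimate and is in fact the mechanism the paper uses later (Proposition~\ref{OperadicHomotopy}, for the contraction onto $C_\bullet([0,1])$ rather than onto $\k\alpha_0$), but as written it also leaves the decisive step as an unverified ``check'': that the transferred structure is the minimal model. That check becomes easy once one observes what the paper's proof rests on, namely that for a contraction with $K(\alpha_0)=0$ the higher transfer terms vanish (every intermediate decomposition of $\alpha_0$ only produces copies of $\alpha_0$, which the homotopy kills), so the transferred structure is exactly the restricted sub-homotopy-cooperad on $\overline{\mathcal{P}^\ac}_x\oplus\overline{\mathcal{P}^\ac}_y\oplus s\mathcal{P}^\ac_f\otimes\k\alpha_0$, whose cobar complex is $\mathcal{P}_{\bullet\to\bullet,\infty}$. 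Your closing appeal to ``any two quasi-free resolutions with the correct generators and leading differential are connected by a filtered quasi-isomorphism'' is not something you may invoke without proof, and it is unnecessary once the above observation is made.
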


\begin{proof}
The inclusion $\k\alpha_0\to\Omega^\vee$ is a quasi-isomorphism of dg cocommutative coalgebras. So the induced map
 \[
\mathcal{V}_{\bullet\to\bullet,\infty}\cong \overline{\mathcal{P}^\ac}_{(x\to x)}\oplus \overline{\mathcal{P}^\ac}_{(y\to y)}\oplus s\mathcal{P}^\ac_{(x\to y)}\otimes\k\alpha_0\to \mathcal{V}_{\bullet\to\bullet,\Omega}
 \]
is a morphism of homotopy cooperads. Let us show that the arising morphism of cobar complexes
 \[
\mathcal{P}_{\bullet\to\bullet,\infty} \to \mathcal{P}_{\bullet\to\bullet,\Omega}
 \]
is a quasi-isomorphism. That would be sufficient for our purposes: the first of these cobar complexes is the minimal model of the operad $\mathcal{P}_{\bullet\to\bullet}$, so we deduce that $\mathcal{P}_{\bullet\to\bullet,\Omega}$ is also a model of $\mathcal{P}_{\bullet\to\bullet}$, the quasi-isomorphism $\mathcal{P}_{\bullet\to\bullet,\Omega}\to \mathcal{P}_{\bullet\to\bullet}$ being given by the canonical projection onto the homology (there is a canonical projection since the quasi-isomorphism statement ensures that the homology is concentrated in degree zero	).

Let us consider the weight filtration of cobar complexes, that is the filtration by the number of internal vertices of trees in free operads. For the first cobar complex, the differential $d_0$ of the associated spectral sequence is zero, for the second one, it is given by the dual of the de Rham differential without any decompositions coming from $\mathcal{P}^\ac$. Therefore at the page $E_1$ of the corresponding spectral sequence, the map induced by the morphism of cobar complexes is an isomorphism. Since the weight filtration is exhaustive and bounded below, it follows from the mapping theorem for spectral sequences~\cite{MacLane95} that the two cobar complexes have the same homology. \qed
\end{proof}

\subsection{Convergent homotopy retracts: a toy example}

Na\"ively, it is natural to assume that the \v{C}ech chain complex $C_\bullet([0,1])$ of the interval should play a crucial role in defining homotopies algebraically. However, our constructions rather use the de Rham complex, or its linear dual. To repair this discrepancy, we shall contract the graded dual of the de Rham complex $\Omega^\vee$ onto its subcomplex isomorphic to $C_\bullet([0,1])$. The cost of that is that some higher structures emerge. In \cite{CG}, Cheng and Getzler performed the dual computation, exhibiting a homotopy commutative algebra structure on the cochain complex. In \cite{BM1,BM2}, Buijs and Murillo compute the transferred coalgebra, working however with the full linear dual, not the graded one. As a consequence, they have to invoke computations with formulae for which the end result makes perfect sense, but intermediate computations go outside the universe of coalgebras, since they involve completions of various coalgebras involved (those formulae are very close to duals of those in~\cite{CG}). We shall aim to make their computations completely rigorous, obtaining a ``converging'' sequence of homotopy retracts that produce a hierarchy of transferred structures on various constructions involving the chain complex of the interval, which ``stabilize at $\infty$'' and produce various meaningful higher structures. Let us be more precise about it. 

Let us denote by $\Omega^\vee_{(N)}$ the linear span of the elements $\alpha_{j}$ with $j\ge N$ and $\beta_{j}$ with $j\ge N-1$. The subspaces $\Omega^\vee_{(N)}$ form a decreasing filtration of $\Omega$ with $\bigcap_N\Omega^\vee_{(N)}=0$, and are compatible with the coproduct in the sense that 
 \[
\delta(\Omega^\vee_{(N)})\subset \sum_{k=0}^N \Omega^\vee_{(k)}\otimes\Omega^\vee_{(N-k)}\subset\Omega^\vee\otimes\Omega^\vee.
 \]

\begin{proposition}\label{retract}
There exists a sequence of homotopy retracts 
 \[
\xymatrix{     *{ \quad \ \  \quad (\Omega^\vee, d)\ } \ar@(dl,ul)[]^{K_N}\ \ar@<0.5ex>[r]^-{\theta} & *{\
(C_\bullet([0,1]),d)\  \ \quad }  \ar@<0.5ex>[l]^-{\omega_N}}
 \]
with $\id_{\Omega^\vee}-\omega_N\theta=dK_N+K_Nd$, for which the map $\theta$ vanishes on $\Omega^\vee_{(2)}$, and for all $s\ge 0$ the images of the maps $\omega_N-\omega_{N+s}$ and $K_N-K_{N+s}$ are contained in $\Omega^\vee_{(N)}$.
\end{proposition}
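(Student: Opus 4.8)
The plan is to construct the data $(\theta,\omega_N,K_N)$ explicitly on the graded dual de Rham complex $\Omega^\vee$, which is two-dimensional in each cohomological degree: degree $0$ spanned by the $\alpha_i$ ($i\ge 0$), degree $1$ spanned by the $\beta_i$ ($i\ge 0$). First I would write down the differential $d$ on $\Omega^\vee$ as the dual of $d_{DR}$; since $d_{DR}(t^i)=i\,t^{i-1}dt$, one gets $d(\beta_i)=(i+1)\alpha_{i+1}$ and $d(\alpha_i)=0$ for $i\ge 1$, $d(\alpha_0)=0$. Its cohomology is one-dimensional in degrees $0$ and $1$ (represented by $\alpha_0$ and $\beta_0$), so $C_\bullet([0,1])$ sits inside as the span of $\alpha_0,\beta_0$; the inclusion-type map is $\omega_N$ (possibly with correction terms in $\Omega^\vee_{(N)}$) and the projection is $\theta$. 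The natural choice is $\theta(\alpha_0)=e_0$, $\theta(\beta_0)=e_{01}$, $\theta(\alpha_i)=\theta(\beta_i)=0$ for $i\ge 1$; this manifestly vanishes on $\Omega^\vee_{(2)}$, which contains all $\alpha_i$ with $i\ge2$ and all $\beta_i$ with $i\ge1$ — and even on $\Omega^\vee_{(1)}$. The subtlety forcing the family indexed by $N$ is the choice of $K_N$: one wants a contracting homotopy killing the acyclic part, and there are many, differing by coboundaries; demanding that $\omega_N$ and $K_N$ eventually agree modulo $\Omega^\vee_{(N)}$ pins down a coherent sequence.

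The key computation is to define a homotopy $K_N$ with $\mathrm{id}-\omega_N\theta = dK_N+K_Nd$. On the degree-$1$ part: $dK_N(\beta_i)$ must produce $\beta_i - (\text{correction})$; since $d$ raises degree, $K_N(\beta_i)$ lies in degree $2$, which is zero, so $K_N$ vanishes on all $\beta_i$ and the identity on degree $1$ reads $\beta_i = \omega_N\theta(\beta_i) + K_N(d\beta_i) = \omega_N\theta(\beta_i) + (i+1)K_N(\alpha_{i+1})$. For $i=0$ this forces $\omega_N(e_{01}) = \beta_0 - K_N(\alpha_1)$; for $i\ge1$ it forces $K_N(\alpha_{i+1}) = \tfrac{1}{i+1}\beta_i$ (since $\theta(\beta_i)=0$), i.e. $K_N(\alpha_j)=\tfrac1j\beta_{j-1}$ for $j\ge 2$. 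The only remaining freedom is $K_N(\alpha_1)$, which must be a degree-$1$ element that is $d$-closed in an appropriate sense; taking $K_N(\alpha_1)=0$ for all $N$ would give a single retract, but to build a genuinely $N$-dependent family that converges I would instead let $K_N$ differ from $K_{N+1}$ only by terms landing in $\Omega^\vee_{(N)}$, so that the images of $\omega_N-\omega_{N+s}$ and $K_N-K_{N+s}$ lie in $\Omega^\vee_{(N)}$ as required. Concretely, one chooses $K_N$ agreeing with the ``standard'' homotopy on $\alpha_1,\dots,\alpha_{N-1},\beta_0,\dots,\beta_{N-2}$ and allows a tail correction supported on higher-index generators; the filtration-compatibility of $\delta$ noted just before the proposition guarantees these corrections propagate consistently. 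I would then check $\mathrm{id}_{\Omega^\vee}-\omega_N\theta=dK_N+K_Nd$ on each generator $\alpha_j,\beta_j$ degree by degree, a routine verification once the formulas are written.

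The main obstacle — and the reason for stating a \emph{sequence} rather than a single retract — is the interplay between two demands on $K_N$: it must be a bona fide contracting homotopy onto the two-dimensional subcomplex, and simultaneously the side conditions ($\theta$ vanishing on $\Omega^\vee_{(2)}$, and $\omega_N-\omega_{N+s}$, $K_N-K_{N+s}$ landing in $\Omega^\vee_{(N)}$) must hold. The first demand is essentially forced, as the computation above shows; the real work is arranging the second, which amounts to choosing the residual freedom in $K_N$ so that successive homotopies are ``$N$-close'' in the filtration topology. This is not hard in itself but requires care to present cleanly: one must exhibit explicit formulas and verify that the differences $K_N-K_{N+s}$ really do factor through the subspaces $\Omega^\vee_{(N)}$, using that $d$ and $\delta$ respect the filtration. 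I expect the verification of the homotopy identity to be mechanical, and the genuinely delicate point to be bookkeeping the indices so that everything beyond the $N$-th generator is absorbed into $\Omega^\vee_{(N)}$.
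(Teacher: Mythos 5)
There is a genuine gap, and it sits exactly at the step you defer as ``routine verification''. You take $\theta(\alpha_0)=\mathbf{0}$, $\theta(\beta_0)=\mathbf{01}$ and $\theta(\alpha_i)=\theta(\beta_i)=0$ for $i\ge 1$; in particular $\theta(\alpha_1)=0$. This $\theta$ is not a chain map: $\theta(d\beta_0)=\theta(\alpha_1)=0$ while $d\theta(\beta_0)=d(\mathbf{01})=\mathbf{1}-\mathbf{0}\neq 0$. Worse, no choice of $\omega_N$, $K_N$ can repair the homotopy identity with this $\theta$: evaluated on $\alpha_1$, the left-hand side of $\id_{\Omega^\vee}-\omega_N\theta=dK_N+K_Nd$ equals $\alpha_1$ (since $\theta(\alpha_1)=0$), whereas the right-hand side is $dK_N(\alpha_1)$ (as $d\alpha_1=0$), which lies in the span of $d\beta_j=(j+1)\alpha_{j+1}$, i.e.\ in $\operatorname{span}(\alpha_j:\ j\ge 2)$, and so can never equal $\alpha_1$. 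The source of the error is the identification of $C_\bullet([0,1])$ with ``the span of $\alpha_0,\beta_0$'': the chain complex of the interval is three-dimensional, spanned by $\mathbf{0},\mathbf{1},\mathbf{01}$ with $d(\mathbf{01})=\mathbf{1}-\mathbf{0}$; it is not the homology of $\Omega^\vee$ (indeed $H_1(\Omega^\vee)=0$, and $\beta_0$ is not even a cycle, so your statement about the cohomology being one-dimensional in degree $1$ is incorrect). The correct projection must satisfy $\theta(\alpha_1)=\theta(d\beta_0)=d\theta(\beta_0)=\mathbf{1}-\mathbf{0}$, which is precisely the choice made in the paper, and it is this term that makes the identity on $\alpha_1$ work once one sets $\omega_N(\mathbf{1})=\sum_{p=0}^{N-1}\alpha_p$ and $K_N(\alpha_1)=-\sum_{j=1}^{N-2}\frac{\beta_j}{j+1}$.

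A secondary, but relevant, shortcoming: you leave the actual $N$-dependence unspecified (``a tail correction supported on higher-index generators''), whereas the content of the proposition that gets used later is the explicit family \eqref{projection}--\eqref{contraction}. As literally stated, the proposition would even be satisfied by a constant sequence of retracts (differences being zero trivially lie in $\Omega^\vee_{(N)}$); the point of the specific formulas is that the maps stabilise, as $N\to\infty$, to the completed expressions $\omega(\mathbf{1})=\sum_{i\ge 0}\alpha_i$ and $\omega(\mathbf{01})=\sum_{i\ge 0}\frac{\beta_i}{i+1}$, whose group-like behaviour under $\delta$ is exactly what drives the homotopy-transfer computations in Propositions~\ref{toy-example} and~\ref{OperadicHomotopy}. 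So even after fixing $\theta$, a proof along your lines would need to exhibit these formulas explicitly rather than appeal to an unspecified correction.
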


\begin{proof}
We define the map $\theta$ so that it identifies the subcomplex spanned by $1^\vee$, $t^\vee$, $dt^\vee$ with $(C_\bullet([0,1]),d)$ as follows:
\begin{gather}\label{projection}
\theta(\alpha_i)=
\begin{cases}
\mathbf{0}, \qquad i=0,\\
\mathbf{1}-\mathbf{0}, \quad i=1,\\
0, \qquad i>1,
\end{cases}
\theta(\beta_i)=
\begin{cases}
\mathbf{01}, \quad i=0,\\
0, \quad i>0.
\end{cases}
\end{gather}
We also put 
\begin{equation}\label{embedding}
\omega_N(\mathbf{0})=\alpha_0,\quad \omega_N(\mathbf{1})=\sum_{p=0}^{N-1}\alpha_p,\quad \omega_N(\mathbf{01})=\sum_{p=0}^{N-2}\frac{\beta_p}{p+1},
\end{equation}
and
\begin{gather}\label{contraction}
 K_N(\alpha_i)=
\begin{cases}
0, \qquad i=0,\\
-\sum_{j=1}^{N-2}\frac{\beta_j}{j+1}, \quad i=1,\\
\frac{\beta_{i-1}}{i},\quad i>1,
\end{cases}
K_N(\beta_i)=0.
\end{gather}
We have
 \[
(dK_N+K_Nd)(\alpha_i)=dK_N(\alpha_i)=
\begin{cases}
0, \qquad i=0,\\
-\sum_{j=2}^{N-1}\alpha_j, \quad i=1,\\
\alpha_i,\quad i>1,
\end{cases}
 \]
and
 \[
(dK_N+K_Nd)(\beta_i)=K_Nd(\beta_i)=K_N((i+1)\alpha_{i+1})=
\begin{cases}
-\sum_{j=1}^{N-2}\frac{\beta_j}{j+1}, \quad i=0,\\
\beta_i, \qquad i>0.
\end{cases}
 \]
Comparing these with Formulae \eqref{embedding} and \eqref{projection} above, we immediately conclude that 
 \[
dK_N+K_Nd=\id_{\Omega^\vee}-\omega_N\theta,   
 \]
as required.
Also,
 \[
(\omega_N-\omega_{N+s})(\mathbf{1})=-\sum_{p=N}^{N+s-1}\alpha_p, \quad (\omega_N-\omega_{N+s})(\mathbf{01})=-\sum_{p=N-1}^{N+s-2}\frac{\beta_p}{p+1},
 \]
and
\begin{gather*}
(K_N-K_{N+s})(\alpha_i)=
\begin{cases}
0, \qquad i=0,\\
\sum_{j=N-1}^{N+s-2}\frac{\beta_j}{j+1}, \quad i=1,\\
0,\quad i>1,
\end{cases}
\end{gather*}
and $\omega_N(\mathbf{0})$, $K_N(\beta_i)$ do not depend on $N$ at all, which proves the second claim.\qed
\end{proof}

The main consequence of the result that we just proved is that we can use it to obtain higher structures that are out of reach otherwise, making sense of computations with infinite sums that only exist in the completion of $\Omega^\vee$ with respect to filtration by subspaces~$\Omega^\vee_{(N)}$.  As a toy model, let us recall how one can recover (the universal enveloping algebra of) the Lawrence-Sullivan dg Lie algebra $\mathfrak{L}_{LS}$ using this retract. 

\begin{proposition}\label{toy-example}
The $A_\infty$-coalgebra structure of the algebra $A_{LS}$ (whose underlying chain complex $\langle u,v,w\rangle$ is isomorphic to $C_{\bullet}([0,1])$) is precisely the limit of structures obtained from the dg coalgebra structure on $\Omega^\vee$ by homotopy transfer formulae along the homotopy retracts from Proposition~\ref{retract}.
\end{proposition}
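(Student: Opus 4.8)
The plan is to apply the homotopy transfer theorem for $A_\infty$-coalgebras along the retracts of Proposition~\ref{retract}, and to verify that the transferred structure on $C_\bullet([0,1])$ stabilizes as $N\to\infty$ to the $A_\infty$-coalgebra $\langle u,v,w\rangle$ written out at the end of Section~\ref{homotopies}. First I would set up the dictionary: $\Omega^\vee$ is a genuine dg coalgebra (the graded dual of the commutative dg algebra $\Omega=\Omega^\bullet([0,1])$), so the transfer formulae for coalgebras --- the linear dual of the Kadeishvili/Kontsevich--Soibelman formulae, summing over planar binary trees with internal edges decorated by the homotopy $K_N$ --- produce an $A_\infty$-coalgebra structure $\{\delta_k^{(N)}\}$ on $C_\bullet([0,1])$ with $\delta_1^{(N)}=\theta\, d\,\omega_N$ and, for $k\ge 2$,
\[
\delta_k^{(N)}=\sum_{\text{planar trees }T} \pm\,\theta^{\otimes k}\circ T(\delta_2,K_N)\circ\omega_N ,
\]
where each $T$ has $k$ leaves and $K_N$ sits on every internal edge. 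The identity $\theta\omega_N=\id$ (visible from \eqref{projection} and \eqref{embedding}) guarantees these formulae are well defined, and on the generators $u=\omega_N(\mathbf 0)^{\,?}$ --- more precisely, using $\omega_N(\mathbf 0)=\alpha_0$, $\omega_N(\mathbf 1)=\sum_{p<N}\alpha_p$, $\omega_N(\mathbf{01})=\sum_{p<N-1}\beta_p/(p+1)$ --- only finitely many trees contribute, so each $\delta_k^{(N)}$ is a finite expression.

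Next I would prove stabilization. The key input is the last sentence of Proposition~\ref{retract}: the differences $\omega_N-\omega_{N+s}$ and $K_N-K_{N+s}$ land in $\Omega^\vee_{(N)}$, and the coproduct is compatible with the filtration $\Omega^\vee_{(\bullet)}$ in the sense stated just before the proposition. Feeding a tree $T$ with $k$ leaves, I would argue by induction on the number of internal vertices that $T(\delta_2,K_N)\circ\omega_N - T(\delta_2,K_{N+s})\circ\omega_{N+s}$ has image in $\Omega^\vee_{(N)}$: replacing one occurrence of $\omega_N$ or $K_N$ by its $(N+s)$-counterpart changes the output by something in $\Omega^\vee_{(N)}$, and the filtration-compatibility of $\delta_2$ and the fact that $K_N$ preserves the filtration (it raises the index) propagate this up the tree. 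Applying $\theta$, which vanishes on $\Omega^\vee_{(2)}\supseteq\Omega^\vee_{(N)}$ for $N\ge 2$, kills the difference entirely; hence for each fixed $k$ the sequence $\delta_k^{(N)}$ is eventually constant in $N$, and its limit $\delta_k^{(\infty)}$ is, by construction, the transferred structure computed formally with the ``limit'' contraction $K_\infty(\alpha_i)=\beta_{i-1}/i$ (for $i>1$), $K_\infty(\alpha_1)=-\sum_{j\ge 1}\beta_j/(j+1)$, $\omega_\infty(\mathbf 1)=\sum_{p\ge 0}\alpha_p$, $\omega_\infty(\mathbf{01})=\sum_{p\ge0}\beta_p/(p+1)$ --- expressions that make sense only in the completion of $\Omega^\vee$, which is exactly the phenomenon the section advertises.

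Finally I would identify the limit with the Lawrence--Sullivan $A_\infty$-coalgebra by a direct computation of low arities, reducing the general formula to the known one. Concretely: $\delta_1^{(\infty)}(w)=u-v$ reads off from $\theta d\omega_\infty$; $\delta_2^{(\infty)}(u)=-u\otimes u$ and $\delta_2^{(\infty)}(v)=-v\otimes v$ come from the coproduct $\delta(\alpha_i)=\sum\alpha_a\otimes\alpha_b$ composed with $\omega_\infty$ and $\theta^{\otimes 2}$; $\delta_2^{(\infty)}(w)=-\tfrac12 w\otimes(u+v)-\tfrac12(u+v)\otimes w$ comes from $\delta(\beta_i)=\sum(\beta_a\otimes\alpha_b+\alpha_a\otimes\beta_b)$ with the $\tfrac12$ produced by the factor $1/(p+1)$ in $\omega_\infty(\mathbf{01})$ evaluated at $p=0$; and for $k\ge 3$ the higher cooperations $\delta_k^{(\infty)}(w)$ are the alternating sums $-\sum_{p+q=k-1}\frac{b_{k-1}}{p!q!}w^{\otimes p}\otimes(u-v)\otimes w^{\otimes q}$, where the Bernoulli-type constants $b_{k-1}$ arise exactly as in the Cheng--Getzler computation of the transferred homotopy commutative structure on the \v{C}ech cochain complex of the interval --- here appearing on the dual side --- because the tree sums over $K_\infty$ with the weights $1/(j+1)$ are the generating-function identity for Bernoulli numbers. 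The expected main obstacle is this last bookkeeping: organizing the tree sum for $\delta_k^{(\infty)}(w)$ so that the Bernoulli numbers drop out cleanly and the signs match the Koszul convention of \eqref{HTTforCoops}. I would handle it by invoking the Cheng--Getzler computation \cite{CG} (and its dual appearance in \cite{BM1}) rather than re-deriving it, once I have checked that the combinatorial data --- planar trees, the contraction $K_\infty$, and the coproduct on $\Omega^\vee$ --- matches theirs under graded duality; the content of this proposition is then precisely that the naive infinite-series manipulation of \cite{BM1,BM2} is the genuine $N\to\infty$ limit of the rigorous finite-level transfers.
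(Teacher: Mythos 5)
Your proposal is correct and follows essentially the same route as the paper: stabilization of the transferred cooperations via the facts that $\omega_N-\omega_{N+s}$ and $K_N-K_{N+s}$ land in $\Omega^\vee_{(N)}$, that the coproduct respects the filtration, and that $\theta$ kills $\Omega^\vee_{(2)}$ (so that for $k<N$ at least one tensor factor of any perturbed term is annihilated), followed by identification of the limit with the Lawrence--Sullivan $A_\infty$-coalgebra by deferring to the computations of \cite{BM1,BM2} and \cite{CG}. The only point to tighten is that the difference of tree-level outputs lives in the induced filtration on $(\Omega^\vee)^{\otimes k}$ (total degree $\ge N$) rather than in $\Omega^\vee_{(N)}$ itself, so the vanishing under $\theta^{\otimes k}$ rests on the pigeonhole observation that some factor has filtration degree at least $2$ --- exactly as in the paper's argument.
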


\begin{proof}
A computation that uses homotopy transfer involving the completion of $\Omega^\vee$ is presented in~\cite{BM1,BM2}, however, that proof has to invoke, at intermediate stages, infinite sums which are not well defined (mainly because the contracting homotopy $K=\lim_{N\to\infty}K_N$ does not restrict to the subspace of the completion of $\Omega^\vee$ with which the authors choose to work). Let us outline a way to fix that problem. We shall show that for our sequence of retracts the transferred map $\delta_{k,N}$ obtained from the $N^\text{th}$ retract does not depend on $N$ for $k<N$. Indeed, if we replace $\omega_N$ by $\omega_{N+s}$ and $K_N$ by $K_{N+s}$, this would change results of intermediate computations by elements from $\Omega^\vee_{(N)}$. Iterations of less than $N$ decompositions of those would produce a tensor product where at least one factor belongs to $\Omega^\vee_{(2)}$, hence will be annihilated at the final step when we apply the map $\theta$ everywhere (this map vanishes on $\Omega^\vee_{(2)}$ by construction). This guarantees that the computation in the spirit \cite{BM1,BM2}, even though either transfers higher structures from a space which is not a coalgebra or uses the contracting homotopy with the wrong codomain, nevertheless produces an honest $A_\infty$-coalgebra. \qed
\end{proof}

\begin{remark}
Of course, in this particular toy example, one could simply dualise the statement of Cheng and Getzler~\cite{CG} to obtain the same result. However, for our main computation, the strategy outlined seems a much more straightforward way to proceed, hence the toy example introducing this strategy.
\end{remark}

\subsection{Higher structures arising from the concordance operad}

We now are ready to relate the operadic formulation of concordance to Markl's approach to homotopy. The strategy for that is to use once again homotopy transfer along a convergent sequence of homotopy retracts. Let us consider the dg $\mathbb{S}$-module
 \[
\mathcal{V}_{\bullet\to\bullet,\Omega}=\overline{\mathcal{P}^\ac}_{(x\to x)}\oplus \overline{\mathcal{P}^\ac}_{(y\to y)}\oplus s\mathcal{P}^\ac_{(x\to y)}\otimes\Omega^\vee
 \]
which carries a homotopy cooperad structure. It is natural to try and transfer that homotopy cooperad structure to
 \[
\mathcal{V}_{\bullet\rightrightarrows\bullet,\infty}:=\overline{\mathcal{P}^{\ac}}_{(x\to x)}\oplus \overline{\mathcal{P}^{\ac}}_{(y\to y)}\oplus s\mathcal{P}^{\ac}_{(x\to y)}\otimes C_\bullet([0,1]) .
 \]
For that, let us mimic the set-up of Proposition~\ref{retract}, and consider the filtration of $\mathcal{V}_{\bullet\to\bullet,\Omega}$ by subspaces $(\mathcal{V}_{\bullet\to\bullet,\Omega})_{(N)}$, where $(\mathcal{V}_{\bullet\to\bullet,\Omega}){(0)}=\mathcal{V}_{\bullet\to\bullet,\Omega}$, and for $N>0$ 
 \[
(\mathcal{V}_{\bullet\to\bullet,\Omega})_{(N)}=s\mathcal{P}^\ac_{(x\to y)}\otimes\Omega^\vee_{(N)}.
 \]

\begin{theorem}\label{OperadicHomotopy}
Let $\mathcal{P}$ be a Koszul operad. The homotopy hypothesis holds for  $\mathcal{P}$.  More precisely,
\begin{enumerate}
\item There exists a sequence of homotopy retracts 
\[
\xymatrix{     *{ \qquad \ \ \  \quad (\mathcal{V}_{\bullet\to\bullet,\Omega}, d)\ } \ar@(dl,ul)[]^{H_N}\ \ar@<0.5ex>[r]^-{p} & *{\
		(\mathcal{V}_{\bullet\rightrightarrows\bullet,\infty},d)\  \ \quad }  \ar@<0.5ex>[l]^-{i_N}}
\]
with $\id_{\mathcal{V}_{\bullet\to\bullet,\Omega}}-i_Np=dH_N+H_Nd$, for which the map $p$ vanishes on $(\mathcal{V}_{\bullet\to\bullet,\Omega})_{(2)}$, and for each $s\ge 0$ the images of the maps $i_N-i_{N+s}$ and $H_N-H_{N+s}$ are contained in $(\mathcal{V}_{\bullet\to\bullet,\Omega})_{(N)}$.

\item The homotopy cooperad structure maps obtained by homotopy transfer along these homotopy retracts stabilise as $N\to\infty$. The limiting homotopy cooperad structure on $\mathcal{V}_{\bullet\rightrightarrows\bullet,\infty}$ has  the leading terms prescribed by the homotopy hypothesis. 
\end{enumerate}

\end{theorem}

\begin{proof}
Note that the differential of $\mathcal{V}_{\bullet\to\bullet,\Omega}$ comes precisely from the dual of the de Rham differential on $\Omega^\vee$. Thus, each homotopy retract
 \[
\xymatrix{     *{ \quad \ \  \quad (\Omega^\vee, d)\ } \ar@(dl,ul)[]^{K_N}\ \ar@<0.5ex>[r]^-{\theta} & *{\
(C_\bullet([0,1]),d)\  \ \quad }  \ar@<0.5ex>[l]^-{\omega_N}}
 \]
from Proposition~\ref{retract} gives rise to a homotopy retract
 \[
\xymatrix{     *{ \qquad \ \ \  \quad (\mathcal{V}_{\bullet\to\bullet,\Omega}, d)\ } \ar@(dl,ul)[]^{H_N}\ \ar@<0.5ex>[r]^-{i} & *{\
(\mathcal{V}_{\bullet\rightrightarrows\bullet,\infty},d)\  \ \quad }  \ar@<0.5ex>[l]^-{p_N}}
 \]
with
\begin{gather*}
H_N(v_1,v_2,sv_3\otimes\lambda)=(0,0,sv_3\otimes K_N(\lambda)),\\
i_N(v_1,v_2,sv_3\otimes c)=(v_1,v_2,sv_3\otimes\omega_N(c)),\\
p(v_1,v_2,sv_3\otimes\lambda)=(v_1,v_2,sv_3\otimes\theta(\lambda)).
\end{gather*}
Let us explain why the transferred structure maps converge to a limit as $N\to\infty$. First, let us note that the filtration of $\mathcal{V}_{\bullet\to\bullet,\Omega}$ by the subspaces $(\mathcal{V}_{\bullet\to\bullet,\Omega})_{(N)}$ is compatible with the homotopy cooperad structure in the following sense: for each decomposition map $\Delta_t$ of this cooperad structure, the result lands in the space of tree-shaped tensors 
 \[
\bigotimes_{v\text{ a vertex of }t}(\mathcal{V}_{\bullet\to\bullet,\Omega})_{(N_v)}  
 \]
with $\sum_v N_v=N$. This compatibility property implies that for our sequence of homotopy retracts the transferred map $\Delta_{t,N}$ obtained from the $N^\text{th}$ retract does not depend on $N$ if the number of internal vertices of $t$ is less than $N$. Indeed, if we replace $i_N$ by $i_{N+s}$ and $H_N$ by $H_{N+s}$, this would change results of intermediate computations by elements from $(\mathcal{V}_{\bullet\to\bullet,\Omega})_{(N)}$. Iterated decompositions of those that result in less than $N$ parts would produce a tree shaped tensor where at least one factor belongs to $(\mathcal{V}_{\bullet\to\bullet,\Omega})_{(2)}$, hence will be annihilated at the final step when we apply the map $p$ everywhere (this map vanishes on $(\mathcal{V}_{\bullet\to\bullet,\Omega})_{(2)}$ by construction).

It follows that to compute the transferred structure, we may apply Formulae~(\ref{HTTforCoops}), and perform all computations in the completion of $\mathcal{V}_{\bullet\to\bullet,\Omega}$ with respect to the filtration $(\mathcal{V}_{\bullet\to\bullet,\Omega})_{(N)}$, without worrying of any convergence issues. In what follows, we denote by $i$ and $H$ the limits of the corresponding maps; they now range in the completion of~$\mathcal{V}_{\bullet\to\bullet,\Omega}$; the meaning of the notation $\omega$ and $K$ is the same, with these maps ranging in the completion of $\Omega^\vee$.

We first note that since maps of the homotopy retract do not interact with the components $\overline{\mathcal{P}^\ac}_{(x\to x)}$ and $\oplus \overline{\mathcal{P}^\ac}_{(y\to y)}$ of $\mathcal{V}_{\bullet\to\bullet,\Omega}$, the transferred structure on these components coincides with the structure before transfer.  We also note that the elements $\omega(\mathbf{0})$ and $\omega(\mathbf{1})$ of $\Omega^\vee$ satisfy the conditions 
\begin{gather*}
\delta(\omega(\mathbf{0}))=\delta(\alpha_0)=\alpha_0\otimes\alpha_0=\omega(\mathbf{0})\otimes\omega(\mathbf{0}),\\
\delta(\omega(\mathbf{1}))=\delta\left(\sum_{i\ge0}\alpha_i\right)=\sum_{i\ge 0}\sum_{a+b=i}\alpha_a\otimes\alpha_b=\omega(\mathbf{1})\otimes\omega(\mathbf{1}),\\
K(\omega(\mathbf{0}))=K(\alpha_0)=0,\\
K(\omega(\mathbf{1}))=K\left(\sum_{i\ge0}\alpha_i\right)=
-\sum_{j\ge 1}\frac{\beta_j}{j+1}+\sum_{i>1}\frac{\beta_{i-1}}{i}=0.
\end{gather*}
This implies that for both $\mathcal{P}^\ac_{(x\to y)}\otimes\mathbf{0}$ and
$\mathcal{P}^\ac_{(x\to y)}\otimes\mathbf{1}$, the transferred homotopy cooperad structure also comes exactly from the sub-cooperads 
$\mathcal{P}^\ac_{(x\to y)}\otimes\omega(\mathbf{0})$ and
$\mathcal{P}^\ac_{(x\to y)}\otimes\omega(\mathbf{1})$ of $\mathcal{V}_{\bullet\to\bullet,\Omega}$. One concludes that this part of the homotopy cooperad structure matches that of $\mathcal{V}_{\bullet\to\bullet,\infty}$, since both $\mathcal{V}_{\bullet\to\bullet,\Omega}$ and $\mathcal{V}_{\bullet\to\bullet,\infty}$, by their very construction, encode morphisms of bar complexes. 

Note that the leading terms in Formula~\eqref{HomotopyLT} come from the cooperations indexed by the only two trees with $n$ leaves that only have vertices with $n$ inputs and vertices with one input, the trees
 \[
 \xygraph{
!{<0pt,0pt>;<10pt,0pt>:<0pt,-10pt>::}
!{(1,1.6)}*{\scriptstyle{}}="a"
!{(-0.6,-1.6)}*{\scriptstyle{}}="e"
!{(2.6,-1.6)}*{\scriptstyle{}}="f"
!{(1,1)}*{\circ}="d"
!{(1,0)}*{\circ}="b"
!{(1,-1.6)}*{\scriptstyle{\ldots}}
"a"-"d"
"d"-"b"
"b"-"e"
"b"-"f"
}
\quad
\text{and}
\quad
 \xygraph{
!{<0pt,0pt>;<10pt,0pt>:<0pt,-10pt>::}
!{(1,1.6)}*{\scriptstyle{}}="a"
!{(-1.6,-1.6)}*{\scriptstyle{}}="e"
!{(3.6,-1.6)}*{\scriptstyle{}}="f"
!{(1,1)}*{\circ}="b"
!{(-1,-1)}*{\circ}="c"
!{(1,-1)}*{\scriptstyle{\ldots}}
!{(3,-1)}*{\circ}="d"
"a"-"b"
"b"-"c"
"b"-"d"
"c"-"e"
"d"-"f"
} \qquad .
 \]
For each of these trees, the contribution of nontrivial expansions of trees with at least two vertices is zero. (It is obvious for the first tree, and for the second tree follows from the fact that such a nontrivial substitution $(((t_1\circ_{j_1} t_2)\circ_{j_2} t_3) \cdots )\circ_{j_k} t_{k+1}$ would have a tree with inputs of both colours as $t_1$, and each decomposition map $\Delta_{t_1}$ for such a tree $t_1$ vanishes on the cooperad $\mathcal{V}_{\bullet\to\bullet,\Omega}$). This means that all the homotopy transfer computations simplify drastically, and the corresponding transferred cooperad maps $\tilde{\Delta}_t$ are given by the na\"ive formula $\tilde{\Delta}_t=t(p)\circ\Delta_t\circ i$. Let us show how the leading terms of \eqref{HomotopyLT} appear in this computation.

We wish to investigate the transferred homotopy cooperad decompositions $\tilde{\Delta}_t$ evaluated on elements
 \[
M_{(x\to y)}\otimes\mathbf{01}\in\mathcal{P}^\ac_{(x\to y)}\otimes C_\bullet([0,1]).
 \]
We instantly recover the leading term
 \[
\mathbf{01}\circ_1 M_{(x\to x)}\in \left(\mathcal{P}^\ac_{(x\to y)}(x)\otimes C_\bullet([0,1])\right)\circ_{(1)} \overline{\mathcal{P}^\ac_{(x\to x)}}
\]
corresponding to the infinitesimal decomposition. However, for the leading term that lands in the space
\[\mathcal{P}^\ac_{(y\to y)}\circ \left(\mathcal{P}^\ac_{(x\to y)}(x)\otimes C_\bullet([0,1])\right), \] the computation is less obvious. The $C_\bullet([0,1])$-label of the corresponding leading term is precisely
 \[
(\theta^{\otimes n}\circ\delta^{n-1}\circ \omega)(\mathbf{01}).
 \]
Let us compute that decoration explicitly. We have
\begin{multline}\label{maincalc}
(\theta^{\otimes n}\circ\delta^{n-1}\circ\omega)(\mathbf{01})=
(\theta^{\otimes n}\circ\delta^{n-1})\left(\sum_{i\ge0}\frac{\beta_i}{i+1}\right)=\\=
\sum_{i\ge 0}\frac{1}{i+1}\theta^{\otimes n}\left(\sum_{i_1+\ldots+i_n=i}\sum_{j=1}^{n}\alpha_{i_1}\otimes\cdots\otimes\alpha_{i_{j-1}}\otimes\beta_{i_{j}}\otimes\alpha_{i_{j+1}}\otimes\cdots\otimes\alpha_{i_n}\right).
\end{multline}
Let us concentrate on the term $j=n$ in the third sum for the moment. Recalling the definition of $\theta$, we conclude that we must have $i_n=0$, and $i_k\in\{0,1\}$ for $k<n$. Together with the condition $i_1+\cdots+i_n=i$, this means that after applying $\theta$ we end up with a sum over all $i$-element subsets of $\{1,\ldots,n-1\}$, and the tensor product has $\mathbf{1}-\mathbf{0}$ on the places indexed by the given subset, and $\mathbf{0}$ otherwise. Since the total sum obviously lands in the subspace of tensors symmetric in the first $n-1$ factors, we may rewrite it as
\begin{multline*}
\sum_{i\ge 0}\frac{1}{i+1}\binom{n-1}{i}\mathbf{0}^{\odot n-1-i}\odot(\mathbf{1}-\mathbf{0})^{\odot i}\otimes\mathbf{01}=\\=
\sum_{i\ge 0}\frac{1}{n}\binom{n}{i+1}\mathbf{0}^{\odot n-1-i}\odot(\mathbf{1}-\mathbf{0})^{\odot i}\otimes\mathbf{01}=
\frac{1}{n}\sum_{k=0}^{n-1}\mathbf{0}^{\odot n-1-k}\odot\mathbf{1}^{\odot k}\otimes\mathbf{01}.
\end{multline*}
Here we used the formulae $\frac{1}{i+1}\binom{n-1}{i}=\frac{1}{n}\binom{n}{i+1}$ and
 \[
\sum_{i=0}^{n-1}\binom{n}{i+1}a^{n-1-i}b^i=\sum_{k=0}^{n-1}a^{n-1-k}(a+b)^k,
 \]
the latter valid in any commutative ring (and is proved in $\mathbb{Z}[a,b]$ by noticing that both the left hand side and the right hand side are equal to the same expression $\frac{(a+b)^n-a^n}{b}$).

Now we recall the contributions of all individual $j=1,\ldots, n$ from~\eqref{maincalc}, and notice that the factor~$\frac{1}{n}$ precisely contributes to creating from all these contributions the term
 \[
\sum_{j=0}^{n-1}\mathbf{0}^{\odot n-1-j}\odot\mathbf{01}\odot\mathbf{1}^{\odot j}.
 \]
This is exactly the same as the element
\[
[[h]]=\Sym\left(h\otimes q^{\otimes(n-1)}+p\otimes h\otimes q^{\otimes(n-2)}+\cdots+p^{\otimes(n-1)}\otimes h\right)
\]
appearing in Formula~\eqref{HomotopyLT}, which completes the proof.  \qed
\end{proof}

We denote by $\mathcal{P}_{\bullet\rightrightarrows\bullet,\infty}$ the cobar complex of the homotopy cooperad $\mathcal{V}_{\bullet\rightrightarrows\bullet,\infty}$ that we just computed in the proof of Proposition~\ref{OperadicHomotopy}. 

\begin{theorem}\label{Th:Markl-operad}
The operad $\mathcal{P}_{\bullet\rightrightarrows\bullet,\infty}$ is a resolution of the operad $\mathcal{P}_{\bullet\to\bullet}$.
\end{theorem}

\begin{proof}
Since the homotopy cooperad structure on $\mathcal{V}_{\bullet\rightrightarrows\bullet,\infty}$ is obtained from that on $\mathcal{V}_{\bullet\to\bullet,\Omega}$ by homotopy transfer, we conclude, using Theorems \ref{Th:concordance-operad} and \ref{OperadicHomotopy} together with the general results on homotopy transfer \cite[Th.~10.3.1]{LV} and existence of inverses for homotopy quasi-isomorphisms \cite[Th.~10.4.4]{LV}, applied to homotopy (co)operads as (co)algebras over an appropriate coloured Koszul operad, that there exist homotopy quasi-isomorphisms	
 \[	
\mathcal{V}_{\bullet\to\bullet,\infty}\stackrel{\sim}{\leadsto} \mathcal{V}_{\bullet\to\bullet,\Omega}\stackrel{\sim}{\leadsto}\mathcal{V}_{\bullet\rightrightarrows\bullet,\infty} .
 \]	
The arising morphism of cobar complexes
 \[
\mathcal{P}_{\bullet\to\bullet,\infty}\to \mathcal{P}_{\bullet\rightrightarrows\bullet,\infty} 
 \]
is a quasi-isomorphism for the same reason as in Theorem~\ref{Th:concordance-operad} (the weight filtration and the mapping theorem for spectral sequences~\cite{MacLane95}), and that completes the proof.\qed
\end{proof}

\begin{corollary}
The notion of operadic homotopy is homotopically equivalent to the notion of concordance. More precisely, we have the equivalence of homotopy categories of algebras
 \[
\mathsf{Ho}(\mathcal{P}_{\bullet\rightrightarrows\bullet,\infty}\text{-}\mathsf{alg})\cong\mathsf{Ho}(\mathcal{P}_{\bullet\to\bullet,\Omega}\text{-}\mathsf{alg}).
 \] 
\end{corollary}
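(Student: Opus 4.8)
The plan is to deduce the statement formally from the two preceding theorems by invoking homotopy invariance of categories of algebras under quasi-isomorphisms of operads. Theorem~\ref{Th:concordance-operad} and Theorem~\ref{Th:Markl-operad} present $\mathcal{P}_{\bullet\to\bullet,\Omega}$ and $\mathcal{P}_{\bullet\to\bullet,\mathrm{chain}}$ as quasi-free resolutions of one and the same two-coloured dg operad $\mathcal{P}_{\bullet\to\bullet}$, so morally both categories of algebras ``are'' the derived category of $\mathcal{P}_{\bullet\to\bullet}$-algebras and must therefore be equivalent. To make this precise I would first recall the standard fact that over a field of characteristic zero the category of algebras over any dg (coloured) operad $\mathcal{O}$ carries the projective model structure, with quasi-isomorphisms as weak equivalences and surjections as fibrations, and that a quasi-isomorphism $f\colon\mathcal{O}\to\mathcal{O}'$ of dg operads induces a Quillen equivalence via the restriction--induction adjunction $f_!\dashv f^*$ (see \cite{LV} and the references therein, in the coloured case Berger--Moerdijk and Hinich); in characteristic zero this needs no $\Sigma$-cofibrancy or boundedness hypothesis, since every $\mathbb{S}$-module is flat.

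Next I would apply this to the two resolution morphisms $\mathcal{P}_{\bullet\to\bullet,\Omega}\xrightarrow{\ \sim\ }\mathcal{P}_{\bullet\to\bullet}$ and $\mathcal{P}_{\bullet\to\bullet,\mathrm{chain}}\xrightarrow{\ \sim\ }\mathcal{P}_{\bullet\to\bullet}$ coming from Theorems~\ref{Th:concordance-operad} and~\ref{Th:Markl-operad}, getting
\[
\mathsf{Ho}(\mathcal{P}_{\bullet\to\bullet,\Omega}\text{-}\mathsf{alg})\cong\mathsf{Ho}(\mathcal{P}_{\bullet\to\bullet}\text{-}\mathsf{alg})\cong\mathsf{Ho}(\mathcal{P}_{\bullet\to\bullet,\mathrm{chain}}\text{-}\mathsf{alg}),
\]
and composing. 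If one prefers to stay within quasi-free (hence cofibrant) operads and not invoke the strict operad on the right, the same conclusion follows from the explicit zigzag of quasi-isomorphisms
\[
\mathcal{P}_{\bullet\to\bullet,\Omega}\ \xleftarrow{\ \sim\ }\ \mathcal{P}_{\bullet\to\bullet,\infty}\ \xrightarrow{\ \sim\ }\ \mathcal{P}_{\bullet\to\bullet,\mathrm{chain}}\, ,
\]
whose two legs are precisely the quasi-isomorphisms of cobar complexes produced inside the proofs of those two theorems (induced by the quasi-isomorphisms of (co)algebras $\k\alpha_0\hookrightarrow\Omega^\vee$ and $\k\mathbf{0}\hookrightarrow C_\bullet([0,1])$ respectively); I would then apply the invariance statement to each leg separately.

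The points that deserve a word of care, none of them a genuine obstacle, are the following. First, ``algebras'' here means algebras over a coloured dg operad with a fixed two-element colour set $\{x,y\}$, so morphisms are colour-preserving; the model-categorical input above is then the coloured version, which is available. Second, the components of $\mathcal{P}_{\bullet\to\bullet}$ and of its resolutions are in general infinite dimensional, owing to the factors $\mathcal{P}^\ac$ and $\Omega^\vee$, but this is harmless in characteristic zero, where the projective model structure on algebras exists and behaves well without finiteness assumptions. Third, $(f_!,f^*)$ is automatically a Quillen pair since $f^*$ does not change underlying complexes and hence preserves (acyclic) fibrations, and it is a Quillen equivalence exactly because $f$ is a quasi-isomorphism, so no fibrancy of the resolution maps is needed. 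In short, I do not expect any real difficulty at this stage: all the substantive work is contained in Theorems~\ref{Th:concordance-operad} and~\ref{Th:Markl-operad}, and the corollary is a formal consequence. A possible alternative, which I would mention but not pursue, is to avoid model categories entirely and build explicit $\infty$-quasi-isomorphisms between the two transferred structures directly out of the homotopy transfer theorem, but the operadic invariance route is shorter and conceptually cleaner.
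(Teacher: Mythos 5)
Your proposal is correct and follows essentially the same route as the paper: both deduce the corollary formally from Theorems~\ref{Th:concordance-operad} and~\ref{Th:Markl-operad} by invoking the invariance of homotopy categories of algebras under quasi-isomorphisms of dg operads in characteristic zero (the paper cites Hinich's Theorem~4.7.4, noting that in characteristic zero all operads are split and quasi-isomorphisms respect splittings). Your extra zigzag through $\mathcal{P}_{\bullet\to\bullet,\infty}$ is a harmless variant of the same argument.
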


\begin{proof}
By Theorems \ref{Th:concordance-operad} and \ref{Th:Markl-operad}, we know that the operads $\mathcal{P}_{\bullet\to\bullet,\Omega}$ and $\mathcal{P}_{\bullet\rightrightarrows\bullet,\infty}$ are both resolutions of $\mathcal{P}_{\bullet\to\bullet}$. Since we work in characteristic zero, all operads are split, and all quasi-isomorphisms of operads are compatible with splittings. Therefore, by \cite[Th.~4.7.4]{Hinich97}, we see that 
 \[
\mathsf{Ho}(\mathcal{P}_{\bullet\to\bullet,\Omega}\text{-}\mathsf{alg})\cong\mathsf{Ho}(\mathcal{P}_{\bullet\to\bullet}\text{-}\mathsf{alg})\cong\mathsf{Ho}(\mathcal{P}_{\bullet\rightrightarrows\bullet,\infty}\text{-}\mathsf{alg}).  
 \]
\qed
\end{proof}

\section{Further directions}\label{future}

One possible direction where our homotopy transfer approach might be useful for ``de-mystifying'' the story is a conjecture made in the end of~\cite{Mar2}. That conjecture suggests, for every operad $\mathcal{P}$ admitting a minimal model $(\mathcal{F}(\mathcal{V}),d)$ and every small category $\mathsf{C}$ with a chosen cofibrant replacement $(\mathsf{F}(\mathsf{W}),\partial)$ of $\k\mathsf{C}$, the existence of a cofibrant replacement
 \[
(\mathcal{F}(\mathcal{V}\otimes\k\mathop{\mathrm{Ob}}(\mathsf{C})\oplus\mathsf{W}\oplus s\mathcal{V}\otimes \mathsf{W}),\mathbf{d})
 \]
for any coloured operad $\mathcal{O}_{\mathcal{P},\mathfrak{D}}$ describing $\mathcal{P}$-algebras and morphisms between them that form a diagram of shape $\mathfrak{D}$. The differential $\mathbf{d}$ of this replacement is conjectured to have a specific shape~\cite{Mar2}. A special case of this conjecture is proved in the case of a Koszul operad $\mathcal{P}$ with all generators of the same arity and degree in~\cite{Doubek}. We hope that homotopy transfer techniques might be the right tool to prove this conjecture in full generality in the Koszul case.

Another natural question to address in future work is to apply homotopy transfer theorems for homotopy retracts from de Rham complexes to \v{C}ech complexes beyond the case of the interval. It would be interesting already in the case of contractible spaces, for example for higher-dimensional simplexes and higher-dimensional disks the corresponding computation would contain further information on the higher dimensional categorification of algebras.

Further, while we concentrated on the case of a Koszul operad $\mathcal{P}$, it would be interesting to generalise the relevant notions and result to the case of any operad admitting a minimal model $(\mathcal{F}(\mathcal{V}),d)$, putting $\mathcal{P}^\ac:=s\mathcal{V}$, and making necessary adjustments in the view of the fact that $\mathcal{P}^\ac$ is no longer an honest cooperad but rather a homotopy cooperad. 

Finally, using the results of the present paper, we are investigating, in works in progress, homotopies of homotopy morphisms of homotopy Loday algebras \cite{AP10}, homotopies of morphisms of Lie $n$-algebroids \cite{BP12} and of Loday algebroids \cite{GKP11}.

\bibliographystyle{amsplain}

\providecommand{\bysame}{\leavevmode\hbox to3em{\hrulefill}\thinspace}
\providecommand{\MR}{\relax\ifhmode\unskip\space\fi MR }
% \MRhref is called by the amsart/book/proc definition of \MR.
\providecommand{\MRhref}[2]{%
  \href{http://www.ams.org/mathscinet-getitem?mr=#1}{#2}
}
\providecommand{\href}[2]{#2}

\end{document}